\newtheorem{thm11}{Theorem}[section]
\newtheorem{theorem}[thm11]{Theorem}
\newtheorem{lemma}[thm11]{Lemma}
\newtheorem{proposition}[thm11]{Proposition}
\newtheorem{conjecture}[thm11]{Conjecture}
\newtheorem{definition}[thm11]{Definition}
\newenvironment{proof}{\emph{Proof. }}{\hspace*{\fill}$\Box$\par\vskip2ex}
\numberwithin{equation}{section}
\journal{Discrete Mathematics}
\tikzstyle{vertex} = [circle, draw, inner sep = 0pt, minimum size = 6pt]
\newcommand{\RS}{\operatorname{RS}}
\begin{document}

\begin{frontmatter}

\title{Characterization of Complete Bipartite Graphs via Resistance Spectra }


\author{Xiang-Yang Liu}
\fntext[myFunding]{Funding: Partially supported by the Excellent University Research and Innovation Team in Anhui Province (No. 2024AH010002) and the
	University Natural Science Research Project of Anhui Province, China (No. 2024AH050069)
}

\author{Xiang-Feng Pan\fnref{myFunding}\corref{mycorrespondingauthor}}
\cortext[mycorrespondingauthor]{Corresponding author}
\ead{xfpan@ahu.edu.cn}

\author{Yong-Yi Jin}

\author{Li-Cheng Li}

\address{School of Mathematical Sciences, Anhui University, Hefei, Anhui, 230601, P. R. China}
\begin{abstract}
The notion of resistance distance, introduced by Klein and Randi\'c, has become a fundamental concept in spectral graph theory and network analysis, as it captures both the structural and electrical properties of a graph. The associated resistance spectrum serves as a graph invariant and plays an important role in problems related to graph isomorphism. For an undirected graph $G=(V,E)$, the resistance distance $R_G(u,v)$ between two distinct vertices $u$ and $v$ is defined as the effective resistance between them when each edge of $G$ is replaced by a $1\,\Omega$ resistor. The multiset of all resistance distances over unordered pairs of distinct vertices is called the \emph{resistance spectrum} of $G$, denoted by $\operatorname{RS}(G)$. A graph $G$ is said to be \emph{determined by its resistance spectrum} if, for any graph $H$, the equality $\operatorname{RS}(H)=\operatorname{RS}(G)$ implies that $H$ is isomorphic to $G$. Complete bipartite graphs, denoted by $K_{m,n}$, are highly symmetric and constitute an important class of graphs in graph theory. In this paper, by exploiting properties of resistance distances, we prove that the complete bipartite graphs $K_{n,n}$, $K_{n,n+1}$, $K_{2,n}$, and $K_{m,n}$ with $m>3n+1$ are uniquely determined by their resistance spectra.
\end{abstract}

\begin{keyword}
Resistance distance, Resistance spectrum, Complete bipartite graph
\MSC[2010]  05C12\sep 05C60
\end{keyword}

\end{frontmatter}

\section{Introduction}\label{sec:introduction}
The notion of resistance distance was first introduced by Klein and Randi\'c in 1993 in the context of electrical network theory \cite{KR1993}. For a given graph \(G\), the resistance distance \(R_{G}(x, y)\) between two vertices \(x\) and \(y\) corresponds to the effective resistance between them when \(G\) is regarded as an electrical network in which every edge represents an \(1\,\Omega\) resistor. If there is no ambiguity, we denote $R_G(x,y)$ by $R(x,y)$ for notational convenience. The resistance diameter \(D_{r}(G)\) of \(G\) is then defined as the maximum value of \(R_{G}(x, y)\) taken over all vertex pairs in \(G\) \cite{SARDAR2020123076}.  As an important graph invariant, the resistance distance offers a more refined characterization of vertex connectivity compared to conventional graph distance measures. This parameter has attracted considerable attention across multiple disciplines, including mathematics, chemistry, and ecology. The resistance distance captures connectivity patterns that resemble wave or fluid propagation between vertices, providing advantages over ordinary distance metrics \cite{klein2002resistance}. 
The measure also supports the analysis of random algorithms \cite{chandra1989electrical} and has found uses in various practical areas, including motion tracking \cite{qiu2006robust}, social network analysis \cite{liben2003link}, and image segmentation \cite{qiu2005image}.  
Recent advances in the subject are thoroughly reviewed in \cite{zhu2024resistanceDirected, sardar2024clawfree} and related works.

The resistance spectrum $\operatorname{RS}(G)$ of a graph $G$ is defined as the multiset of resistance distances between all pairs of distinct vertices in $G$. A graph $G$ is determined by its resistance spectrum if, for any graph $H$, the identity $\operatorname{RS}(H)=\operatorname{RS}(G)$ implies that $H$ is isomorphic to $G$. The concept of using the resistance spectrum to study the isomorphism of the graph is attributed to Baxter, who conjectured that two graphs are isomorphic precisely when their resistance spectra coincide \cite{Baxter1999RS22}. However, this conjecture was later refuted by explicit counterexamples showing that non-isomorphic graphs can share identical resistance spectra \cite{Rickard1999Counterexample23, WeissteinRSEquivalent}. Despite this, the resistance spectrum remains a powerful invariant that captures global structural information, motivating continued research into which graph families are uniquely characterized by it. Recent work has identified many such families. Complete graphs and cycles were shown to be determined by their resistance spectra in \cite{Xu2021Resistance}. This result was extended to paths and lollipop graphs in \cite{Xu2022Resistance}. Further progress includes star graphs, double star graphs, and generalized octopus graphs \cite{zhou2025determination}, as well as double starlike trees, sandglass graphs, kite graphs, and chained oxide networks \cite{xing2025determination}. Studies also confirm the determinability of comb graphs, $T$-shape trees, $\infty$-graphs, and dumbbell graphs via their resistance spectra \cite{10.1088/1402-4896/adf7df}. A method for constructing non-isomorphic graphs with the same resistance spectrum is presented in \cite{XU2025114284}, offering insight into the limitations of this invariant. 
 	
 	A graph \(G=(V,E)\) is said to be a complete bipartite graph if its vertex set \(V\) can be partitioned into two nonempty disjoint subsets \(V_1\) and \(V_2\) (i.e., \(V = V_1 \cup V_2\) and \(V_1 \cap V_2 = \emptyset\)) such that every edge of \(G\) joins a vertex in \(V_1\) with a vertex in \(V_2\) (so \(G\) is bipartite), and moreover every vertex in \(V_1\) is adjacent to every vertex in \(V_2\). If \(|V_1| = m\) and \(|V_2| = n\), the graph is denoted by \(K_{m,n}\).  As a core subject in graph theory, complete bipartite graphs are widely studied across discrete mathematics and computer science because of their clear structure and natural suitability for modeling bipartite systems. Researchers have advanced their understanding from multiple perspectives: Othman and Berger \cite{OTHMAN2024113865} addressed the almost fair partitioning of perfect matchings in matching theory, realizing refined evaluation of matching quality; Zhao et al. \cite{inproceedings} explored graph orientation and proposed an algorithm for constructing orientations with specific extremal properties; Bekos et al. \cite{Bekos2023OnTD} introduced deque-based models to linear layout problems, establishing new bounds for the complexity of layout of complete bipartite graphs. Together, these works that span structural properties, algorithm design, and complexity analysis underscore the profound theoretical potential and the lasting research value of complete bipartite graphs.
 	
 In this paper, we studied the properties of the resistance distance in bipartite graphs. Using existing resistance distance properties, we proved that $K_{n,n}$, $K_{n,n+1}$, and $K_{m,n}$ with $m > 3n + 1$ can be determined by their resistance spectra. Furthermore, we conjecture that all complete bipartite graphs can be determined by their resistance spectra.
 	
\section{Preliminary knowledge}
 This section establishes the theoretical framework for our analysis by presenting core concepts from electrical network theory along with supporting lemmas. These fundamental elements are integral to the validity of our subsequent proofs. The scope of this work is confined to undirected simple graphs. For undefined terminology and notation, the reader is directed to Bondy and Murty's \textit{Graph Theory} \cite{BM2008graph}.

	Let \( G = (V(G), E(G)) \) represent a connected simple undirected graph, where \( V(G) \) and \( E(G) \) correspond to the vertex set and edge set respectively. For any vertex \( v \in V(G) \), \( d_G(v) \) signifies its degree, while \( N_G(v) \) denotes its neighborhood. When no confusion arises, we also denote $d_G(u)$ and $N_G(u)$ simply by $d(u)$ and $N(u)$, respectively. 
 
 We now introduce the lemmas and principles required to prove our main results.
\begin{lemma}[Series Principle]\label{lemma:Serirs Principle}
	For a series circuit comprising $n$ resistors with individual resistances $r_1, r_2, \dots, r_n$, the entire network can be replaced by an equivalent single resistor. The equivalent resistance $R$ is equal to the summation of all individual resistances:
	\[R = \sum_{i=1}^{n} r_i = r_1 + r_2 + \cdots + r_n.\]
\end{lemma}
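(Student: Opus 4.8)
The plan is to prove the \emph{Series Principle} directly from the defining physical laws of the network model, namely Ohm's law together with Kirchhoff's current and voltage laws; an entirely equivalent route is induction on $n$, collapsing the first $n-1$ resistors by the induction hypothesis and then treating the resulting two-resistor chain, but the direct argument is cleaner and I would present that. Label the series chain so that the two terminal nodes are $v_0$ and $v_n$, the internal nodes are $v_1,\dots,v_{n-1}$, and the $i$-th resistor, of resistance $r_i$, joins $v_{i-1}$ to $v_i$. To compute the effective resistance between $v_0$ and $v_n$, inject a current $I$ at $v_0$, extract it at $v_n$, and let $\varphi$ denote the resulting potential function on the nodes.

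The first key step is to show that the same current $I$ flows through every resistor of the chain. Each internal node $v_i$ with $1\le i\le n-1$ is incident only to the resistors $v_{i-1}v_i$ and $v_iv_{i+1}$ and to no other edge of the network, so it has degree exactly two; Kirchhoff's current law at $v_i$ then forces the current entering through $v_{i-1}v_i$ to equal the current leaving through $v_iv_{i+1}$. Applying this at $v_1,v_2,\dots,v_{n-1}$ in turn, and using that the current entering the chain at $v_0$ is $I$, we conclude by an immediate induction that the current through the $i$-th resistor equals $I$ for every $i$. The second step is to sum the potential drops: by Ohm's law the drop across the $i$-th resistor is $\varphi(v_{i-1})-\varphi(v_i)=I r_i$, so by Kirchhoff's voltage law (equivalently, by telescoping)
\[
\varphi(v_0)-\varphi(v_n)=\sum_{i=1}^{n}\bigl(\varphi(v_{i-1})-\varphi(v_i)\bigr)=\sum_{i=1}^{n} I r_i = I\sum_{i=1}^{n} r_i .
\]
Since the effective resistance between $v_0$ and $v_n$ is by definition the ratio of the potential difference to the injected current, we obtain $R=\bigl(\varphi(v_0)-\varphi(v_n)\bigr)/I=\sum_{i=1}^{n} r_i$, as claimed.

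There is no serious obstacle here; the one point that deserves care — and the only place the hypothesis is actually used — is the appeal to Kirchhoff's current law at the internal nodes, which is valid precisely because each such node has degree two within the whole network. If any internal node carried an additional incident edge, the current would no longer be forced to be uniform along the chain and the conclusion would fail, so I would state the degree-two condition explicitly rather than leave it implicit in the phrase ``series circuit''.
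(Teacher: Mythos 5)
Your proof is correct and is the standard derivation: Kirchhoff's current law at the degree-two internal nodes forces a uniform current along the chain, and Ohm's law plus telescoping of the potential drops gives $R=\sum_i r_i$. The paper states the Series Principle as a known background fact without proof, so there is nothing to compare against; your argument (including the explicit remark that the internal nodes must have no other incident edges) is exactly the justification one would supply.
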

\begin{lemma}[Parallel Principle]\label{lemma:Parallel principle}
	When $n$ resistors with resistances $r_1, r_2, \dots, r_n$ are connected in parallel, 
	the entire configuration can be replaced by an equivalent single resistor. 
	The equivalent resistance $R$ satisfies the following:
	\[
	\frac{1}{R} = \sum_{k=1}^{n} \frac{1}{r_k} = \frac{1}{r_1} + \frac{1}{r_2} + \cdots + \frac{1}{r_n}.
	\]
\end{lemma}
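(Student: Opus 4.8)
The plan is to derive the parallel formula directly from Ohm's law together with Kirchhoff's current law, which are the only physical inputs required. Fix the two common terminals $a$ and $b$ to which all $n$ resistors are attached, and let $V$ denote the potential difference $V_a - V_b$ produced by a test current $I$ flowing into $a$ and out of $b$. Because the resistors share both endpoints, each of them experiences exactly the same potential drop $V$; hence Ohm's law gives the branch current $I_k = V/r_k$ through the $k$-th resistor. Kirchhoff's current law applied at $a$ forces $I = \sum_{k=1}^n I_k$, so $I = V \sum_{k=1}^n 1/r_k$. Since the equivalent resistance is by definition $R = V/I$, dividing this identity through gives $1/R = \sum_{k=1}^n 1/r_k$, which is the claimed relation.

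An alternative, purely inductive route is also available and is perhaps cleaner for a self-contained presentation. First I would establish the base case $n = 2$: two resistors $r_1, r_2$ in parallel combine to the single resistor $R = r_1 r_2/(r_1 + r_2)$, equivalently $1/R = 1/r_1 + 1/r_2$, either by the one-line current-balance argument above or by direct computation. For the inductive step I would replace the first $n-1$ resistors by their equivalent $R'$, which by the induction hypothesis satisfies $1/R' = \sum_{k=1}^{n-1} 1/r_k$, and then combine $R'$ in parallel with $r_n$ via the $n = 2$ case to get $1/R = 1/R' + 1/r_n = \sum_{k=1}^{n} 1/r_k$.

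There is no real obstacle here; the single point deserving care is the well-definedness of the equivalent resistance, i.e. the fact that the terminal current is proportional to the terminal voltage, so that the ratio $R = V/I$ is independent of the chosen excitation. This follows from the linearity of the governing equations — Ohm's and Kirchhoff's laws are linear — and it is the very same principle underlying the Series Principle of Lemma~\ref{lemma:Serirs Principle}. Once it is granted, both the direct computation and the induction go through without further difficulty.
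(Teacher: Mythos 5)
Your derivation is correct: the paper states the Parallel Principle without proof, as a classical fact of electrical network theory, so there is no in-paper argument to compare against. Your current-balance argument via Ohm's law and Kirchhoff's current law (together with the remark on well-definedness of $R=V/I$ by linearity) is exactly the standard justification, and the inductive variant reducing to the two-resistor case is an equally valid packaging of the same idea.
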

\begin{lemma}[The elimination principle]\cite{klein2002resistance}\label{lemma:The elimination principle}
 Consider a connected graph $G$ and a block $B$ of $G$ containing exactly one cut vertex $w$. 
 Let $G' = G - (V(B) \setminus \{w\})$ be the graph obtained by removing all the vertices of $B$ except $w$. 
 Then for any pair of vertices $u, v \in V(G')$, the effective resistance satisfies the following:
 \[
 R_G(u, v) = R_{G'}(u, v).
 \]
 \end{lemma}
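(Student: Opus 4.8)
The plan is to view $G$ as an electrical network and to show that driving a current between two vertices of $G'$ pushes no current into the pendant block $B$, so that erasing $B$ cannot change any effective resistance among the vertices of $G'$. Fix $u,v\in V(G')$, inject a unit current at $u$ and withdraw it at $v$, and let $\phi$ be the resulting potential function on $V(G)$; then $R_G(u,v)=\phi(u)-\phi(v)$, and $\phi$ is harmonic (satisfies the mean-value property, since every edge is a unit resistor) at every vertex except $u$ and $v$.

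The crux is to prove that the net current flowing from $w$ into $B$ is zero. Because $B$ is a block of $G$ whose only cut vertex is $w$, every edge from a vertex of $S:=V(B)\setminus\{w\}$ to the rest of $G$ actually lands at $w$; and $u,v\in V(G')$ gives $u,v\notin S$. Summing Kirchhoff's current law over all vertices of $S$, the current on each edge internal to $S$ cancels (counted once with each sign), so the total current across the cut separating $S$ from $\{w\}$ equals the total source inside $S$, which is $0$. Hence no current enters $B$; and since $\phi$ is then harmonic on the connected graph $B$ with the single boundary vertex $w$, the maximum principle forces $\phi$ to be constant on $V(B)$, equal to $\phi(w)$.

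To finish, restrict $\phi$ to $V(G')$. The mean-value property still holds at every vertex of $V(G')\setminus\{u,v\}$: nothing changes away from $w$, and at $w$ the deleted edges into $B$ carried zero current, so the current balance at $w$ is unaffected. Therefore $\phi|_{V(G')}$ is the potential function for the unit current flow from $u$ to $v$ in $G'$, and $R_{G'}(u,v)=\phi(u)-\phi(v)=R_G(u,v)$. (Equivalently, one can invoke Thomson's minimum-energy principle: restricting an energy-minimising unit $u$-$v$ flow of $G$ gives a unit $u$-$v$ flow of $G'$ of no larger energy, and padding an energy-minimising unit $u$-$v$ flow of $G'$ with zeros on $E(B)$ gives a unit $u$-$v$ flow of $G$ of the same energy; the two inequalities yield equality.)

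The step I expect to be the main obstacle is the middle one: establishing rigorously that a pendant block carries no current. The point is that this must be shown for the true electrical current rather than for an arbitrary flow, and the neat way to see it is the global summation of Kirchhoff's current law over all of $S$ (together with uniqueness of the current flow), or, alternatively, the flow-restriction argument under Thomson's principle. The rest --- including the degenerate case $w\in\{u,v\}$, which is handled verbatim since only $u,v\notin S$ is used --- is routine bookkeeping.
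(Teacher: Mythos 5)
The paper does not prove this lemma at all---it is quoted from Klein's work with a citation---so there is no in-paper argument to compare against; your proposal is a correct, self-contained derivation. The key structural fact is handled properly: since $w$ is the only cut vertex of $G$ lying in $B$, every edge incident to a vertex of $S=V(B)\setminus\{w\}$ stays inside $B$, so $u,v\in V(G')$ forces $\phi$ to be harmonic at every vertex of $S$ with all relevant neighbours inside $V(B)$, and the maximum principle on the connected block $B$ with boundary $\{w\}$ gives $\phi\equiv\phi(w)$ on $V(B)$; hence each edge of $B$ carries zero current, the restriction of $\phi$ to $V(G')$ is the unit-current potential for the pair $(u,v)$ in $G'$ (including at $w$, and in the degenerate case $w\in\{u,v\}$), and the two effective resistances coincide. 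One small remark: the global Kirchhoff summation over $S$, which you flag as the crux, is actually redundant---it only yields that the \emph{net} current across the cut $\{wx : x\in S\}$ vanishes, whereas the maximum-principle step already gives the stronger conclusion that the current on every individual edge of $B$ is zero, which is what the final bookkeeping at $w$ requires. The Thomson/Rayleigh variant you sketch (restriction and zero-padding of unit flows) is an equally valid route and is closer in spirit to the energy-minimisation framework of the cited source.
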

\begin{lemma}[Triangle inequality]\cite{KR1993}\label{lemma:Triangle inequality}
	For any connected graph $G$ and every triple of vertices $u, v, w \in V(G)$, 
	\[ R_G(u, v) + R_G(v, w) \geq R_G(u, w). \]
\end{lemma}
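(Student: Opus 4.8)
The plan is to realize the effective resistance through the electrical potential induced by a unit current flow and then to combine two such flows by superposition. First I would fix the standard setup: to measure the resistance between a source vertex and a sink vertex, inject one unit of current at the source and withdraw one unit at the sink. By Kirchhoff's current law and Ohm's law (the Series and Parallel Principles being the special cases), this determines, up to an additive constant, a potential function $\phi$ on $V(G)$ that is \emph{harmonic} at every vertex other than the source and sink, in the sense that $\phi(x)$ is the average of the potentials of the neighbours of $x$. The governing relation is that the effective resistance between source and sink equals the potential drop $\phi(\text{source})-\phi(\text{sink})$.

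Next I would invoke superposition of flows. Let $\phi_1$ be the potential realizing a unit current from $u$ to $v$, and $\phi_2$ the potential realizing a unit current from $v$ to $w$. The net external current associated with $\phi_1+\phi_2$ is $+1$ at $u$, then $0$ at $v$ (the withdrawal of the first flow cancels the injection of the second), and $-1$ at $w$; hence $\phi_1+\phi_2$ is exactly a potential realizing a unit current from $u$ to $w$. Reading off the drop between $u$ and $w$ gives
\[
R_G(u,w) = \bigl[\phi_1(u)-\phi_1(w)\bigr] + \bigl[\phi_2(u)-\phi_2(w)\bigr].
\]

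The final ingredient is the discrete maximum principle for harmonic potentials. Since $\phi_1$ is harmonic away from its source $u$ and sink $v$, it attains its maximum at $u$ and its minimum at $v$, so in particular $\phi_1(v)\le \phi_1(w)\le \phi_1(u)$ and therefore $\phi_1(u)-\phi_1(w)\le \phi_1(u)-\phi_1(v)=R_G(u,v)$. Applying the same reasoning to $\phi_2$, whose source is $v$ and sink is $w$, yields $\phi_2(u)-\phi_2(w)\le \phi_2(v)-\phi_2(w)=R_G(v,w)$. Substituting both bounds into the displayed identity gives $R_G(u,w)\le R_G(u,v)+R_G(v,w)$, which is the desired triangle inequality.

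The step I expect to be the main obstacle is the rigorous justification of the maximum principle, namely that a function harmonic at all non-source, non-sink vertices cannot exceed the source value nor fall below the sink value. This requires arguing that at an interior vertex the potential is a genuine (conductance-weighted) average of its neighbours' potentials, so a strict interior extremum would force all neighbours, and hence by connectedness every vertex, to share that value, contradicting the presence of a nonzero current. One should also note the additive gauge freedom in each $\phi_i$, but since only potential \emph{differences} enter the argument this ambiguity is harmless.
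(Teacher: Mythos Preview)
The paper does not supply its own proof of this lemma; it is simply stated with a citation to Klein and Randi\'c and then used as a tool later in Section~\ref{sec:Main}. So there is nothing in the paper to compare your argument against line by line.

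That said, your argument is correct and is one of the standard routes to the triangle inequality for effective resistance. The two substantive steps, superposition of the two unit-flow potentials to obtain the $u\to w$ potential, and the maximum principle bounding $\phi_1(w)$ between $\phi_1(v)$ and $\phi_1(u)$ (and symmetrically for $\phi_2$), are both valid exactly as you outlined; linearity of the Kirchhoff/Laplacian equations justifies the first, and the averaging property at interior vertices together with connectedness justifies the second. Your identification of the maximum principle as the point requiring the most care is accurate, and the sketch you give (an interior extremum forces constancy, contradicting nonzero current) is the right idea. The additive gauge freedom is indeed irrelevant since only potential differences appear.

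For context, the original proof in the cited reference proceeds somewhat differently, working from the representation $R_G(i,j)=\Gamma_{ii}+\Gamma_{jj}-2\Gamma_{ij}$ in terms of a generalized inverse $\Gamma$ of the Laplacian and exploiting positive semidefiniteness; your potential-theoretic argument is more hands-on and arguably more transparent, while the matrix approach has the advantage of yielding the full metric axioms in one stroke.
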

\begin{definition}
Consider two electrical networks $G$ and $H$ that share a common subset of vertex $S \subseteq V(G) \cap V(H)$. 
These networks are precisely termed $S$-equivalent when 
\[ R_G(u, v) = R_H(u, v) \quad \text{for all} \quad u, v \in S. \]
\end{definition}
\begin{proposition}[Principle of substitution]\label{proposition:substitution}
	Suppose $G$ is an electrical network with vertex set $V(G)$, and $H$ is a connected subgraph of $G$. 
	Let $H^{*}$ be a graph that satisfies $V(H) \subseteq V(H^{*})$ and such that $H$ and $H^{*}$ are $V(H)$-equivalent. 
	Form $G^{*}$ by replacing $H$ with $H^{*}$ in $G$. 
	Then for every pair of vertices $u, v \in V(G)$,
	\[
	R_G(u, v) = R_{G^{*}}(u, v),
	\]
	this implies that $G$ and $G^{*}$ are $V(G)$-equivalent.
\end{proposition}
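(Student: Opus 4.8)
The plan is to argue at the level of graph Laplacians and Kron reduction (Schur complementation), since effective resistances are encoded in the Laplacian and the operation ``replace $H$ by $H^{*}$'' interacts transparently with the elimination of internal vertices; this is really Lemma~\ref{lemma:The elimination principle} in its general Schur-complement form. Throughout, write $L_{N}$ for the Laplacian of a network $N$, and recall the two standard facts I will use: (i) for a connected network, eliminating a set of vertices at which no current is injected (equivalently, forming the Schur complement of $L_{N}$ onto the complementary vertex set) yields a network with the same effective resistance between any two surviving vertices; and (ii) on a fixed vertex set $S$, a connected network is determined by its effective-resistance matrix, since $R(i,j)=L^{+}_{ii}+L^{+}_{jj}-2L^{+}_{ij}$ together with $L^{+}\mathbf 1=\mathbf 0$ recovers $L^{+}$, hence $L$.

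First I would fix notation: put $A=V(G)\setminus V(H)$ and $W=V(H^{*})\setminus V(H)$, so that $V(G^{*})=A\cup V(H)\cup W=V(G)\sqcup W$, and split the edges as $E(G)=E(H)\sqcup F$ with $F=E(G)\setminus E(H)$. Every edge of $F$ joins two vertices of $V(G)$, so the edge-wise decomposition of Laplacians gives $L_{G}=\widehat{L_{H}}+\widehat{L_{F}}$ and $L_{G^{*}}=\widehat{L_{H^{*}}}+\widehat{L_{F}}$, where a hat denotes the natural padding by zero rows and columns to the appropriate vertex set; crucially, $\widehat{L_{F}}$ is identically zero on all rows and columns indexed by $W$.

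Next I would compute the Schur complement of $L_{G^{*}}$ obtained by eliminating $W$ (the internal vertices created by the substitution). Because $\widehat{L_{F}}$ vanishes on $W$, this elimination only acts on the $\widehat{L_{H^{*}}}$ summand, and one gets $L_{G^{*}}/W=\widehat{(L_{H^{*}}/W)}+\widehat{L_{F}}$, where $L_{H^{*}}/W$ is the Kron reduction of $H^{*}$ onto $V(H)$. By fact (i) this reduction is a connected network on $V(H)$ whose effective resistances among $V(H)$ equal those of $H^{*}$, which by hypothesis equal those of $H$; since $H$ itself is a connected network on $V(H)$ with no internal vertices, fact (ii) forces $L_{H^{*}}/W=L_{H}$. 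Hence $L_{G^{*}}/W=\widehat{L_{H}}+\widehat{L_{F}}=L_{G}$. Applying fact (i) once more, for any $u,v\in V(G)$ the resistance $R_{G^{*}}(u,v)$ equals the effective resistance between $u$ and $v$ in the network with Laplacian $L_{G^{*}}/W=L_{G}$, i.e. $R_{G^{*}}(u,v)=R_{G}(u,v)$, which is precisely the asserted $V(G)$-equivalence of $G$ and $G^{*}$.

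The step I expect to demand the most care is fact (ii), namely upgrading the hypothesis ``$H$ and $H^{*}$ have the same pairwise effective resistances on $V(H)$'' to the matrix identity $L_{H^{*}}/W=L_{H}$; the cases $|V(H)|\le 2$ are immediate, and in general one relies on the bijection between the resistance matrix and $L^{+}$ on a connected network (one should also note in passing that a Kron reduction of a connected graph Laplacian is again a bona fide network Laplacian, so that fact (ii) is applicable to it). An alternative that sidesteps this is a direct energy argument via Thomson's principle: any unit $u$--$v$ flow in $G$ extends through the ``box'' $H^{*}$ to a flow in $G^{*}$ of equal dissipated power, and conversely, because $V(H)$-equivalence makes the two boxes indistinguishable from their boundary $V(H)$; minimizing over flows then equates the two effective resistances. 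Everything else is block-matrix bookkeeping together with the transitivity of vertex elimination.
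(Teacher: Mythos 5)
Your argument is correct, and in fact it supplies something the paper does not: Proposition~\ref{proposition:substitution} is stated in the paper without any proof at all, so there is no ``paper approach'' to compare against. Your Schur-complement route is the standard and cleanest way to make the folklore substitution principle rigorous. The block computation $L_{G^{*}}/W=\widehat{(L_{H^{*}}/W)}+\widehat{L_{F}}$ is right, precisely because $\widehat{L_{F}}$ has zero rows and columns on $W$, and your two supporting facts are both genuine: (i) is Kron reduction preserving effective resistances, and (ii) follows from the explicit recovery $L^{+}=-\tfrac{1}{2}\bigl(I-\tfrac{1}{n}J\bigr)R\bigl(I-\tfrac{1}{n}J\bigr)$, which you should state rather than merely gesture at, since it is the crux of upgrading ``same resistances'' to ``same Laplacian.'' Three small points to tighten. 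First, the invertibility of the block $L_{WW}=(L_{H^{*}})_{WW}$ needs every component of $H^{*}$ to meet $V(H)$ (in practice one assumes $H^{*}$ connected, which the hypothesis of $V(H)$-equivalence implicitly forces); say so. Second, handle $W=\emptyset$ separately, where fact (ii) alone gives $L_{H^{*}}=L_{H}$ and hence $G^{*}=G$ as networks. Third, note that the identity $L_{H^{*}}/W=L_{H}$ is an identity of weighted Laplacians, so $L_{G}=L_{G^{*}}/W$ is a weighted network even though $G$ is a simple graph; this is harmless because fact (i) and the resistance formula $R(u,v)=(e_{u}-e_{v})^{\mathsf T}L^{+}(e_{u}-e_{v})$ work for arbitrary nonnegative edge weights, but the statement of the proposition in the paper is phrased for ``electrical networks,'' so the weighted setting is the right one throughout. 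Your closing remark that a Thomson/Dirichlet energy argument gives an alternative proof is also correct and would avoid fact (ii) entirely.
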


\begin{lemma} \cite{KR1993}\label{lemma:RDcut}
Suppose $w$ is a cut vertex in a connected graph $G$. 
If $u$ and $v$ lie in distinct connected components after removing $w$ from $G$, then
\[
R_G(u, v) = R_G(u, w) + R_G(w, v).
\]
\end{lemma}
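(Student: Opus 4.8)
The plan is to realise $G$ as an electrical network of unit resistors and to argue with electric potentials, using superposition together with the structural fact that a cut vertex forces all current between its two sides to flow through it. First I would record the decomposition imposed by the hypothesis: let $A$ be the vertex set of the component of $G-w$ containing $u$, set $B=V(G)\setminus(A\cup\{w\})$ so that $v\in B$ and no edge joins $A$ to $B$, and write $G_1=G[A\cup\{w\}]$ and $G_2=G[B\cup\{w\}]$.

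Next I would analyse the unit current flow from $u$ to $w$ in $G$, with potential $\psi$ (so $R_G(u,w)=\psi(u)-\psi(w)$), and show that $\psi$ is constant on $B$. For this, solve the unit $u$--$w$ flow inside $G_1$ alone, obtaining a potential $\psi_1$ on $A\cup\{w\}$, and extend it to all of $V(G)$ by setting $\psi_1\equiv\psi_1(w)$ on $B$. A vertex-by-vertex check of Kirchhoff's law — at each vertex of $A$ it reduces to the corresponding equation of the $G_1$-problem since those $G$-neighbourhoods lie in $A\cup\{w\}$, on $B$ it is trivial because $B$ is equipotential, and at $w$ the edges into $B$ carry no current so it again reduces to the $G_1$-equation — shows that this extension solves the unit $u$--$w$ flow in $G$; by uniqueness of the potential up to an additive constant it equals $\psi$. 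In particular $\psi(v)=\psi(w)$, and moreover $R_G(u,w)=R_{G_1}(u,w)$. The symmetric argument applied to the unit $w$--$v$ flow in $G$, with potential $\chi$, gives that $\chi$ is constant on $A$, so $\chi(u)=\chi(w)$ and $R_G(w,v)=R_{G_2}(w,v)$.

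Finally I would superpose: $\psi+\chi$ is a potential whose induced currents satisfy Kirchhoff's law with net external current $+1$ at $u$, $(-1)+1=0$ at $w$, and $-1$ at $v$, hence it is the potential of the unit $u$--$v$ flow in $G$. Therefore
\[
R_G(u,v)=(\psi+\chi)(u)-(\psi+\chi)(v)=\bigl(\psi(u)-\psi(v)\bigr)+\bigl(\chi(u)-\chi(v)\bigr)=\bigl(\psi(u)-\psi(w)\bigr)+\bigl(\chi(w)-\chi(v)\bigr),
\]
using $\psi(v)=\psi(w)$ and $\chi(u)=\chi(w)$; the right-hand side is exactly $R_G(u,w)+R_G(w,v)$.

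The step I expect to be the main obstacle is the equipotential claim on the far side of $w$, i.e.\ that no current crosses $w$ into $B$ in the $u$--$w$ problem: it is physically obvious but making it rigorous relies on uniqueness of the harmonic potential (equivalently, the absence of circulating currents in a source-free subnetwork), which is the same mechanism that underlies the elimination principle of Lemma~\ref{lemma:The elimination principle}. One could instead try to package everything by using the Principle of Substitution to collapse $G_1$ to a single resistor $uw$ of resistance $R_{G_1}(u,w)$ and $G_2$ to a single resistor $wv$ of resistance $R_{G_2}(w,v)$ and then invoking the Series Principle (Lemma~\ref{lemma:Serirs Principle}); but since $G_1$ and $G_2$ need not be series--parallel, such a collapse requires the general two-terminal reduction ($Y$--$\Delta$ / star--mesh) rather than only Lemmas~\ref{lemma:Serirs Principle} and~\ref{lemma:Parallel principle}, so the direct potential-function route above is cleaner and self-contained.
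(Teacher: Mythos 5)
The paper offers no proof of this lemma at all---it is quoted from Klein and Randi\'c \cite{KR1993} as a known fact---so there is nothing internal to compare your argument against; what matters is only whether your proof stands on its own, and it does. The decomposition into $A$, $B$, $G_1$, $G_2$ is set up correctly (note $B$ may be a union of several components of $G-w$, but your argument never uses connectedness of $B$, only that no edge joins $A$ to $B$ and that all of $v$'s relevant neighbours stay in $B\cup\{w\}$). The verification that the constant extension of the $G_1$-potential satisfies Kirchhoff's law at every vertex of $G$ is complete, and you correctly identify that the one nontrivial ingredient is uniqueness of the unit-current potential up to an additive constant; granting that standard fact, $\psi(v)=\psi(w)$ and $\chi(u)=\chi(w)$ follow, and the superposition step is just linearity of Ohm's law plus additivity of the external currents, giving $R_G(u,v)=R_G(u,w)+R_G(w,v)$ exactly as you compute. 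Your closing remark is also well judged: the tempting shortcut via Proposition~\ref{proposition:substitution} and Lemma~\ref{lemma:Serirs Principle} does not go through as stated, because replacing $G_1$ by a single resistor $uw$ is only $\{u,w\}$-equivalence rather than the $V(G_1)$-equivalence that the substitution principle demands, so the direct potential-theoretic route you chose is the right one. The proof is correct and self-contained.
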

\begin{lemma}\cite{H2010}\label{lamma:n-2}
	For a connected graph $G$ and any distinct vertices $u, v \in V(G)$, 
	\[
	d_G(u) R_G(u, v) + \sum_{z \in N_G(u)} \big( R_G(z, u) - R_G(z, v) \big) = 2.
	\]
\end{lemma}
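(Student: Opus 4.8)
The plan is to read both sides of the identity through the electrical-network model and to recognize the whole statement as Kirchhoff's current law at the single vertex $u$. Fix the current flow in which one unit of current is injected at $u$ and one unit is extracted at $v$, and let $\phi\colon V(G)\to\mathbb{R}$ be the resulting potential function (normalized however we like). Because every edge carries a $1\,\Omega$ resistor, the current along an edge $ab$ equals $\phi(a)-\phi(b)$, and the definition of effective resistance gives $\phi(u)-\phi(v)=R_G(u,v)$.

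First I would translate vertex potentials into resistance distances. Writing $R_G(a,b)=L^{+}_{aa}+L^{+}_{bb}-2L^{+}_{ab}$, where $L^{+}$ is the Moore--Penrose pseudoinverse of the graph Laplacian $L$ and $e_a$ denotes the $a$-th standard basis vector, one has $\phi=L^{+}(e_u-e_v)$ up to an additive constant; a short computation then gives the three-point formula
\[
\phi(a)-\phi(b)=\tfrac12\bigl(R_G(a,v)+R_G(b,u)-R_G(a,u)-R_G(b,v)\bigr),
\]
valid for all $a,b\in V(G)$. Taking $a=u$ and $b=z$ for a neighbor $z\in N_G(u)$ and using $R_G(u,u)=0$, this specializes to $\phi(u)-\phi(z)=\tfrac12\bigl(R_G(u,v)+R_G(z,u)-R_G(z,v)\bigr)$.

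Next I would apply current conservation at $u$: the total current leaving $u$ along its incident edges equals the unit injected there, so $\sum_{z\in N_G(u)}\bigl(\phi(u)-\phi(z)\bigr)=1$. Substituting the previous expression for each term and multiplying through by $2$ yields
\[
d_G(u)\,R_G(u,v)+\sum_{z\in N_G(u)}\bigl(R_G(z,u)-R_G(z,v)\bigr)=2,
\]
which is the claim. A purely algebraic variant bypasses the current flow entirely: expand the left-hand side with the pseudoinverse formula and collapse the neighbor sums using $LL^{+}=I-\tfrac1n J$, whose $(u,u)$ entry equals $1-\tfrac1n$ and whose $(u,v)$ entry equals $-\tfrac1n$ for $u\neq v$; these two contributions combine to give exactly $2$.

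I do not expect a genuine obstacle here, since the content of the lemma is nothing more than conservation of current at one vertex. The only point that needs care is the bridge between potentials and resistance distances: one must justify the pseudoinverse representation of $R_G$ (equivalently, the three-point formula) cleanly, and then keep careful track of the factor $\tfrac12$ and of the vanishing term $R_G(u,u)=0$ so that the constants line up to $2$ rather than to some other value.
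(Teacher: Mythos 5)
Your proof is correct. Note that the paper does not prove this lemma at all --- it is imported from the cited reference \cite{H2010} as a known result --- so there is no in-paper argument to compare against. Your derivation is the standard one: the three-point identity $\phi(a)-\phi(b)=\tfrac12\bigl(R_G(a,v)+R_G(b,u)-R_G(a,u)-R_G(b,v)\bigr)$ follows directly from the pseudoinverse representation $R_G(a,b)=L^{+}_{aa}+L^{+}_{bb}-2L^{+}_{ab}$ with $\phi=L^{+}(e_u-e_v)$, and Kirchhoff's current law at $u$ with unit edge resistances gives $\sum_{z\in N_G(u)}(\phi(u)-\phi(z))=1$, which after substitution and doubling is exactly the claimed identity; the purely algebraic variant via $LL^{+}=I-\tfrac1n J$ also checks out.
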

\begin{lemma}\cite{foster1949average}\label{lemma:n-1}
In any connected graph $G$ with $n$ vertices,
\[
\sum_{uv \in E(G)} R_G(u, v) = n - 1.
\]
\end{lemma}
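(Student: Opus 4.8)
The plan is to establish Foster's network theorem by exploiting the combinatorial meaning of the effective resistance of a single edge. The key input is the classical consequence of Kirchhoff's matrix--tree theorem (equivalently, the transfer-current theorem) that for every edge $e=uv$ of a connected graph $G$,
\[
R_G(u,v)=\frac{\tau(G/e)}{\tau(G)}=\mathbb{P}[\,e\in T\,],
\]
where $\tau(\cdot)$ counts spanning trees, $G/e$ is $G$ with $e$ contracted, and $T$ is a uniformly random spanning tree of $G$. Granting this, the theorem drops out by linearity of expectation: summing over all edges,
\[
\sum_{uv\in E(G)}R_G(u,v)=\sum_{e\in E(G)}\mathbb{P}[\,e\in T\,]=\mathbb{E}\,|E(T)|=n-1,
\]
since every spanning tree of an $n$-vertex connected graph has exactly $n-1$ edges. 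I would take this one-line argument as the main proof.

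For readers who prefer to stay within the purely electrical toolkit, the same identity admits a proof by induction on $|E(G)|$. The base case is a tree: every edge is a bridge, so by the Series Principle (Lemma~\ref{lemma:Serirs Principle}) each satisfies $R_G(u,v)=1$, and there are exactly $n-1$ of them. For the inductive step choose an edge $e$ lying on a cycle, so $G\setminus e$ is still connected. Deletion--contraction gives $\tau(G)=\tau(G\setminus e)+\tau(G/e)$; setting $p=\tau(G\setminus e)/\tau(G)$ and $q=\tau(G/e)/\tau(G)$ (so $p+q=1$), the commutation of deletion and contraction yields $R_G(f)=p\,R_{G\setminus e}(f)+q\,R_{G/e}(f)$ for every $f\neq e$, while $R_G(e)=q$. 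Summing over $E(G)$ and applying the inductive hypothesis to $G\setminus e$ (on $n$ vertices) and to $G/e$ (on $n-1$ vertices) gives $q+p(n-1)+q(n-2)=(p+q)(n-1)=n-1$.

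A third, spectral route: if $0=\lambda_1<\lambda_2\le\cdots\le\lambda_n$ are the Laplacian eigenvalues of $G$ with orthonormal eigenvectors $x_1,\dots,x_n$, then $R_G(u,v)=\sum_{k\geq 2}\lambda_k^{-1}\bigl(x_k(u)-x_k(v)\bigr)^2$; interchanging this with $\sum_{uv\in E(G)}$ and using $\sum_{uv\in E(G)}\bigl(x_k(u)-x_k(v)\bigr)^2=x_k^{\top}Lx_k=\lambda_k$ collapses the double sum to $\sum_{k\geq 2}1=n-1$. In every version the only ingredient that goes beyond the series/parallel/elimination lemmas already assembled --- and hence the main obstacle --- is the one structural fact being invoked: the spanning-tree formula $R_G(e)=\tau(G/e)/\tau(G)$, or the Laplacian-pseudoinverse expression for $R_G$. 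Once either is granted, the summation itself is routine bookkeeping.
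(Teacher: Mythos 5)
Your argument is correct, but note that the paper does not prove this lemma at all: it is quoted as Foster's classical theorem with a citation to \cite{foster1949average}, so there is no internal proof to compare against. Of your three routes, the first (uniform spanning tree) is the standard modern one-line proof and is sound, the spectral computation is likewise correct, and the deletion--contraction induction also works, with two small caveats you should make explicit. First, the whole weight of the first and second proofs rests on the identity $R_G(e)=\tau(G/e)/\tau(G)$, which is Kirchhoff's theorem and is strictly deeper than anything in the paper's toolkit (Lemmas \ref{lemma:Serirs Principle}--\ref{lemma:rayleigh}); you flag this honestly, but it must be cited or proved, since it is not ``routine bookkeeping'' away from the stated lemmas. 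Second, the induction is not closed under the class of simple graphs: contracting an edge of a triangle produces parallel edges, so the inductive statement has to be formulated for connected multigraphs with unit resistances from the outset (Foster's theorem does hold in that generality, and the base case of a tree, where every edge is a bridge of resistance $1$, is unaffected). With those two points patched, any one of your three arguments is a complete and correct proof of the lemma, and each is more informative than the bare citation the paper gives.
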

\begin{lemma}\label{lemma:xiajie}
Let $ G $ be a simple connected graph. Then the resistance distance between any two vertices$  u $ and $ v$  satisfies \[
R_{G}(u, v) \geq \frac{1}{d_{G}(u)+1}+\frac{1}{d_{G}(v)+1}
\]
 and the equality holds if and only if $ u $ and $ v $ are adjacent and have the same set of neighbors in $ G-\{u, v\} $.
\end{lemma}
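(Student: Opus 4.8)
I would prove the inequality by a shorting (Rayleigh monotonicity) argument and then isolate the equality case via the harmonic potential that realizes the $u$–$v$ current flow. Write $a=d_G(u)$ and $b=d_G(v)$, and let $H$ be the network obtained from $G$ by merging all vertices of $V(G)\setminus\{u,v\}$ into a single vertex $\omega$, discarding the self-loops so created. Since identifying a set of vertices amounts to joining them by ideal wires, Rayleigh monotonicity gives $R_G(u,v)\ge R_H(u,v)$, and $H$ is small enough to be evaluated directly by the series and parallel laws (Lemmas \ref{lemma:Serirs Principle} and \ref{lemma:Parallel principle}).

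First I would dispose of two easy cases. If $uv\notin E(G)$, then $H$ is a bundle of $a$ parallel unit edges in series with a bundle of $b$ parallel unit edges, so $R_H(u,v)=\frac1a+\frac1b>\frac1{a+1}+\frac1{b+1}$; thus the inequality is strict and equality cannot occur for non-adjacent pairs. If $uv\in E(G)$ but $a=1$ (the case $b=1$ being symmetric), then $u$ is a pendant vertex adjacent to $v$, so $R_G(u,v)=1\ge\frac12+\frac1{b+1}$, with equality exactly when $b=1$, i.e.\ when $G\cong K_2$ — which is precisely the claimed equality configuration. So the heart of the matter is $uv\in E(G)$ with $a,b\ge2$; here $H$ is the unit edge $uv$ in parallel with the two-edge path $u\!-\!\omega\!-\!v$ of total resistance $c:=\frac1{a-1}+\frac1{b-1}$, whence
\[
R_G(u,v)\ \ge\ R_H(u,v)\ =\ \frac{c}{1+c}\ =\ \frac{a+b-2}{ab-1}.
\]
An elementary manipulation then gives $\dfrac{a+b-2}{ab-1}-\Big(\dfrac1{a+1}+\dfrac1{b+1}\Big)=\dfrac{(a-b)^2}{(ab-1)(a+1)(b+1)}\ge0$, which yields the asserted bound $R_G(u,v)\ge\frac1{a+1}+\frac1{b+1}$.

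For equality, suppose $R_G(u,v)=\frac1{a+1}+\frac1{b+1}$. The non-adjacent analysis shows $u\sim v$, and then the displayed chain forces $a=b$ together with $R_G(u,v)=R_H(u,v)$. I would argue that this last equality can hold only if the potential $\phi$ carrying the unit $u$–$v$ flow in $G$ (normalized by $\phi(u)=1$, $\phi(v)=0$) is constant on $V(G)\setminus\{u,v\}$: otherwise the merge producing $H$ strictly relaxes a genuine constraint in the Dirichlet (variational) description of effective conductance, forcing $R_H(u,v)<R_G(u,v)$. Once $\phi\equiv\psi$ on $V(G)\setminus\{u,v\}$ with $0<\psi<1$ (maximum principle), the mean-value identity at a vertex $x\in V(G)\setminus\{u,v\}$ becomes $\psi\big([x\sim u]+[x\sim v]\big)=[x\sim u]$, which, since $\psi\ne0,1$, is consistent only when $x$ is adjacent to both $u$ and $v$ or to neither; hence $N_G(u)\setminus\{v\}=N_G(v)\setminus\{u\}$. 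Conversely, if $u\sim v$ and $u,v$ have common neighbourhood $\{w_1,\dots,w_k\}$ in $G-\{u,v\}$ (so $a=b=k+1$), the function $\phi$ with $\phi(u)=1$, $\phi(v)=-1$ and $\phi\equiv0$ elsewhere is harmonic off $\{u,v\}$, injects $k+2$ units of current at $u$ and has potential drop $2$, so $R_G(u,v)=\frac2{k+2}=\frac1{a+1}+\frac1{b+1}$.

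The main obstacle is this forward direction of the equality characterization: converting ``equality in the shorting estimate'' into the combinatorial twin condition needs the variational picture together with the constancy-of-potential observation, and one must also remember to treat the pendant cases $a=1$, $b=1$ separately. The rest — the series/parallel evaluation of $H$ and the inequality $(a-b)^2\ge0$ — is routine.
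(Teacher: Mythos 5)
The paper states this lemma with neither a proof nor a citation, so there is no in-paper argument to compare yours against; judged on its own, your proof is correct and complete. The shorting construction, the series--parallel evaluation $R_H(u,v)=\frac{a+b-2}{ab-1}$ in the adjacent case with $a,b\ge 2$, and the identity $\frac{a+b-2}{ab-1}-\left(\frac{1}{a+1}+\frac{1}{b+1}\right)=\frac{(a-b)^2}{(ab-1)(a+1)(b+1)}$ all check out, as do the separate treatments of the non-adjacent case (where $R_H=\frac1a+\frac1b$ makes the inequality strict) and the pendant case $a=1$. The only delicate point is the forward equality direction: Rayleigh's law as quoted in the paper (Lemma \ref{lemma:rayleigh}) gives only $R_H\le R_G$, so you rightly supplement it with the Dirichlet variational principle --- equality under shorting holds iff the harmonic potential is already constant on the identified set --- and the resulting mean-value identity $\psi\bigl([x\sim u]+[x\sim v]\bigr)=[x\sim u]$ with $0<\psi<1$ (maximum principle) does force every $x\notin\{u,v\}$ to be adjacent to both of $u,v$ or to neither. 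The converse via the explicit harmonic function $\phi(u)=1$, $\phi(v)=-1$, $\phi\equiv 0$ elsewhere is also correct. Two minor points worth making explicit: the strictness $R_H<R_G$ when the potential is non-constant on the merged class relies on uniqueness of the Dirichlet minimizer (strict convexity of the energy on a connected graph), and this lemma is in fact a known published result, so the paper presumably intended to cite it rather than leave it unproved --- your argument supplies the missing justification.
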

\begin{lemma}[Rayleigh's monotonicity law]\cite{J1871}\label{lemma:rayleigh}
    Consider two electrical networks $N$ and $N'$ where $N'$ is obtained by maintaining or increasing the resistance of each edge relative to $N$. 
Then for all vertices $u, v \in V(N')$,
\[
R_N(u, v) \leq R_{N'}(u, v).
\]
\end{lemma}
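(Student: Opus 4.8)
The plan is to derive the monotonicity law from Thomson's minimum-energy principle, which expresses the effective resistance as the least power dissipated among competing unit flows. For a connected network $N$ with edge resistances $(r_e)_{e\in E}$ and distinct terminals $u,v$, this principle says
\[
R_N(u,v)=\min_{\theta}\ \sum_{e\in E} r_e\,\theta(e)^2,
\]
where $\theta$ runs over all unit $u$-$v$ flows, i.e.\ antisymmetric edge functions obeying Kirchhoff's node law at every vertex other than $u$ and $v$, with unit net flow out of $u$; the minimizer is the electric current $i$ driven by a unit current injected at $u$ and withdrawn at $v$, for which $R_N(u,v)=\sum_e r_e i(e)^2$ by Ohm's law.

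If the principle is not simply cited, I would first prove it in a couple of lines: write a competing unit flow as $\theta=i+w$ with $w$ a circulation (zero net flow at every vertex), and expand $\sum r_e\theta(e)^2=\sum r_e i(e)^2+2\sum r_e i(e)w(e)+\sum r_e w(e)^2$; using $r_e i(e)=\phi(x)-\phi(y)$ for the oriented edge $e=xy$ with potential $\phi$, the cross term is $\sum_{e=xy}(\phi(x)-\phi(y))w(e)=\sum_x \phi(x)\cdot(\text{net flow of }w\text{ at }x)=0$, so $\sum r_e\theta(e)^2\ge\sum r_e i(e)^2=R_N(u,v)$. I would then reduce to the case $N'$ connected (otherwise $R_{N'}(u,v)=\infty$ and the claim is trivial, or one restricts to the component containing $u$ and $v$), identify $E(N')$ with a subset of $E(N)$ on which $r'_e\ge r_e$, and regard each deleted edge as carrying infinite resistance. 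Let $i'$ be the unit $u$-$v$ current of $N'$, extended by $0$ on the edges of $N$ absent from $N'$; it is a unit $u$-$v$ flow in $N$, whence
\[
R_N(u,v)=\sum_{e\in E(N)} r_e\, i(e)^2\le\sum_{e\in E(N)} r_e\, i'(e)^2=\sum_{e\in E(N')} r_e\, i'(e)^2\le\sum_{e\in E(N')} r'_e\, i'(e)^2=R_{N'}(u,v),
\]
the first inequality being Thomson's principle in $N$ tested on $i'$ and the last the edgewise hypothesis $r'_e\ge r_e$.

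The only substantive ingredient is thus the minimum-energy principle; granting it, the monotonicity is essentially immediate, since enlarging every resistance can only raise the dissipation of any fixed flow. An equivalent route uses the dual Dirichlet principle, $C_N(u,v)=\min\{\sum_{e=xy} c_e(f(x)-f(y))^2:\ f(u)=1,\ f(v)=0\}$ with conductances $c_e=1/r_e$: lowering each $c_e$ lowers this functional pointwise in $f$, hence lowers its minimum, so the effective conductance decreases and $R_N(u,v)$ increases. What remains is purely bookkeeping for the degenerate cases (deleted edges, a disconnected $N'$) and for the normalization $R_N(u,v)=\sum_e r_e i(e)^2$, which is just Ohm's law together with the definition of effective resistance under a unit current.
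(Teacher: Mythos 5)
Your proof is correct. The paper does not prove this lemma at all --- it is quoted from the literature (the citation \cite{J1871}) and used as a black box --- so there is no in-paper argument to compare against. Your derivation via Thomson's minimum-energy principle (the unit $u$--$v$ current of $N'$, extended by zero on deleted edges, is an admissible test flow in $N$, and edgewise $r_e \leq r'_e$ makes its energy only larger in $N'$) is the standard and complete proof; the verification that the cross term $\sum_e r_e\, i(e) w(e)$ vanishes for a circulation $w$ is exactly the right justification of Thomson's principle, and your handling of the degenerate cases (deleted edges as infinite resistors, disconnected $N'$) closes the remaining gaps.
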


By Rayleigh's monotonicity law, we can get the following conclusions.
\begin{proposition} \label{proposition:subgraph}
For any subgraph $H$ of a graph $G$ and for every pair of vertices $u, v \in V(H)$,
\[
R_G(u, v) \leq R_H(u, v).
\]
\end{proposition}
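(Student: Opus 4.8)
The plan is to realize $G$ as the outcome of starting from $H$ and performing a sequence of modifications that, by Lemma~\ref{lemma:rayleigh}, can only preserve or decrease effective resistances. Since $H$ is a subgraph of $G$ we have $V(H)\subseteq V(G)$ and $E(H)\subseteq E(G)$. First I would introduce the intermediate network $H_0$ on the full vertex set $V(G)$ whose edge set is exactly $E(H)$; the vertices of $V(G)\setminus V(H)$ are then isolated in $H_0$, so they lie in singleton blocks and, by Lemma~\ref{lemma:The elimination principle} (or simply because an isolated vertex carries no current), $R_{H_0}(u,v)=R_H(u,v)$ for all $u,v\in V(H)$.

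Next I would pass from $H_0$ to $G$ by re-inserting the edges of $E(G)\setminus E(H)$ one at a time. Each such step fits the framework of Lemma~\ref{lemma:rayleigh}: viewing an absent edge as an edge of infinite resistance, inserting it amounts to lowering that one edge's resistance from $\infty$ to $1$ while leaving all other edges untouched, and Rayleigh's monotonicity law then says no effective resistance increases. (A reader wary of the value $\infty$ may instead give the edge resistance $t$ and let $t\to\infty$: the effective resistance between any two fixed vertices is monotone in $t$ and bounded above by its value in the network without that edge, so the limiting comparison is still valid.) Iterating over all of $E(G)\setminus E(H)$ gives $R_G(u,v)\le R_{H_0}(u,v)$ for every $u,v\in V(H)$.

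Chaining the two relations yields $R_G(u,v)\le R_H(u,v)$ for all $u,v\in V(H)$, which is the claim; if $u$ and $v$ happen to lie in different components of $H$ the right-hand side is $+\infty$ and the inequality is vacuous, so no connectivity assumption on $H$ is required. I do not expect a genuine obstacle here: the statement is a routine corollary of Lemma~\ref{lemma:rayleigh}, and the only points needing a sentence of care are the standard convention that identifies edge deletion/addition with a monotone change of an edge resistance, and the harmless removal of the extra isolated vertices in $H_0$.
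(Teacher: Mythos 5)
Your proposal is correct and follows the same route as the paper, which simply derives this proposition from Rayleigh's monotonicity law (Lemma~\ref{lemma:rayleigh}) by viewing the absent edges of $E(G)\setminus E(H)$ as edges of infinite resistance. Your write-up merely spells out the limiting argument and the harmless extra isolated vertices more carefully than the paper does.
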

\begin{proposition}\label{proposition:cycle}\cite{xing2025determination}
Suppose $G$ is a simple connected graph containing at least one cycle. 
If $u$ and $v$ are adjacent vertices on some cycle of $G$, then
\[
R_G(u, v) < 1.
\]
\end{proposition}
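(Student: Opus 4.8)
The plan is to avoid computing anything in $G$ directly and instead localize the problem to a single cycle. Since $u$ and $v$ are adjacent and lie on a common cycle $C$ of $G$, I would let $H$ be the subgraph of $G$ obtained by taking the cycle $C$ together with the edge $uv$ (this edge is either already an edge of $C$, or a chord of it). Then $H$ is a subgraph of $G$ with $u,v\in V(H)$, so by Proposition~\ref{proposition:subgraph} it suffices to prove $R_H(u,v)<1$; the bound then transfers to $G$ automatically.

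Inside $H$ the computation is a one-line series/parallel reduction. The cycle $C$ furnishes a $u$–$v$ path $P$ that is internally disjoint from the edge $uv$ (if $u,v$ are consecutive on $C$ this is the complementary arc of length $|E(C)|-1$; in general it is one of the two $u$–$v$ arcs of $C$, of some integer length $k\ge 1$). By the Series Principle (Lemma~\ref{lemma:Serirs Principle}) the path $P$ collapses to a single resistor of resistance $k$, which is in parallel with the unit resistor $uv$, so by the Parallel Principle (Lemma~\ref{lemma:Parallel principle})
\[
R_H(u,v)=\frac{1\cdot k}{1+k}=\frac{k}{k+1}<1 .
\]
Combining with Proposition~\ref{proposition:subgraph} yields $R_G(u,v)\le R_H(u,v)=\tfrac{k}{k+1}<1$, as claimed.

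There is essentially no obstacle here; the only point that needs a moment's care is the phrase ``adjacent vertices on some cycle.'' One must check that the cycle really does supply a second $u$–$v$ connection edge-disjoint from the edge $uv$: a cycle passing through both $u$ and $v$ splits into two internally disjoint $u$–$v$ arcs, each of positive integer length, and at least one of them is not the single edge $uv$, which is exactly what the reduction above uses. Everything else is the standard series/parallel simplification together with Rayleigh monotonicity in the packaged form of Proposition~\ref{proposition:subgraph}.
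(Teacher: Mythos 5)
Your proof is correct and follows essentially the same route the paper relies on: the paper only cites this proposition, but reproduces exactly your argument inline in the proof of Theorem~\ref{k2n} (restrict to a cycle through the edge $uv$, reduce the complementary arc in series, combine in parallel with the unit edge to get $\frac{t-1}{t}<1$, and transfer to $G$ via Rayleigh monotonicity / Proposition~\ref{proposition:subgraph}). The only cosmetic slip is that when $uv$ is a chord of $C$ your $H=C\cup\{uv\}$ contains a third $u$--$v$ arc, so the exact value $\frac{k}{k+1}$ should be either an upper bound or computed in the smaller subgraph $P\cup\{uv\}$; this does not affect the conclusion.
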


\section{Main results}\label{sec:Main}

In this section, we will prove in sequence that $K_{n,n}$, $K_{n,n+1}$, and $K_{m,n}$ (when $m > 3n + 1$) can be determined by their resistance spectra. Before proceeding, it is necessary to determine the resistance spectrum of the complete bipartite graph $K_{m,n}$. Suppose that the bipartition of $K_{m,n}$ is $X$ and $Y$, where $X = \{x_1, x_2, \dots, x_m\}$ and $Y = \{y_1, y_2, \dots, y_n\}$. According to \cite{klein2002resistance}, for any two vertices $u$ and $v$ in $K_{m,n}$, the resistance distance is given by the following formulas:

\[
	R(u, v) = \begin{cases}
		\dfrac{2}{n}, & \text{if } u, v \in X, \\[10pt]
		\dfrac{2}{n}, & \text{if } u, v \in X, \\[10pt]
		\dfrac{1}{m}+\dfrac{1}{n}-\dfrac{1}{mn}, & \text{if } u\in X, v \in Y.
	\end{cases}
\]
Consequently, the resistance spectrum of $K_{m,n}$ can be fully derived.
\[
\RS(K_{m,n})=\left\lbrace \left[\frac{2}{m} \right]^{\frac{n(n-1)}{2}}, \left[ \frac{1}{m}+\frac{1}{n}-\frac{1}{mn}\right]^{mn}, \left[\frac{2}{n} \right]^{\frac{m(m-1)}{2}} \right\rbrace.
\]

 Since all elements in the multiset $\RS(K_{m,n})$ are finite, $H$ must be a connected graph when it has the same resistance spectrum as $K_{m,n}$. We will now prove that $K_{n,n}$ can be determined by its resistance spectrum.
\begin{theorem}\label{knn}
	The complete bipartite graph $K_{n,n}$ is determined by its resistance spectrum.
\end{theorem}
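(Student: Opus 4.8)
The plan is to start from a graph $H$ with $\RS(H)=\RS(K_{n,n})$ and deduce $H\cong K_{n,n}$ by reading off combinatorial data from the spectrum. First I would record the basic numerical invariants: since $\RS(K_{n,n})$ consists of $n(n-1)$ copies of $\tfrac{2}{n}$ and $n^2$ copies of $\tfrac{2}{n}-\tfrac{1}{n^2}$, the multiset has exactly $n(n-1)+n^2=\binom{2n}{2}$ entries, so $H$ has $2n$ vertices; and by Lemma~\ref{lemma:n-1}, summing the resistances over a spanning structure forces a count on $|E(H)|$. More precisely, Foster's theorem (Lemma~\ref{lemma:n-1}) gives $\sum_{uv\in E(H)}R_H(u,v)=2n-1$, and since every resistance in the spectrum is strictly less than $1$ (all values are $\le \tfrac{2}{n}<1$ for $n\ge 3$; the small cases $n=1,2$ can be checked directly or handled by $K_{2,n}$), each edge contributes less than $1$, while Proposition~\ref{proposition:cycle} and Lemma~\ref{lemma:xiajie} pin the contributions more tightly. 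Comparing $\sum_{uv\in E(H)}R_H(u,v)=2n-1$ with the available spectral values should force $|E(H)|=n^2$.

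Next I would identify the two "resistance classes." The value $\tfrac{2}{n}$ occurs $n(n-1)=2\binom{n}{2}$ times and the value $\tfrac{2}{n}-\tfrac{1}{n^2}$ occurs $n^2$ times; since the latter is the global minimum of the spectrum, Lemma~\ref{lemma:xiajie} tells us that every pair $\{u,v\}$ realizing it must be adjacent with $N_{H-\{u,v\}}(u)=N_{H-\{u,v\}}(v)$, and moreover the equality case forces $\tfrac{2}{n}-\tfrac1{n^2}=\tfrac1{d_H(u)+1}+\tfrac1{d_H(v)+1}$, which already constrains the degrees. The key structural step is to show these $n^2$ minimum-distance pairs are exactly the edges of $H$ (using $|E(H)|=n^2$ from the previous paragraph) and that the adjacency pattern they impose — each such edge joining two vertices with identical neighborhoods in the deletion — is rigid enough to force the bipartition. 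I would use Lemma~\ref{lamma:n-2} applied at a vertex $u$: knowing $d_H(u)$ and the resistances from $u$ to its neighbors versus to non-neighbors gives a linear identity that, combined with the fact that only two distinct resistance values appear, pins down $d_H(u)$ exactly, hence $H$ is $n$-regular. An $n$-regular graph on $2n$ vertices with $n^2$ edges in which every edge's endpoints have the same neighborhood after deletion is $K_{n,n}$.

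I would organize the write-up as: (1) vertex count and edge count from the spectrum via Foster; (2) degree regularity via Lemma~\ref{lamma:n-2}, showing $H$ is $n$-regular; (3) apply Lemma~\ref{lemma:xiajie}'s equality characterization to the $n^2$ minimum pairs to extract the neighborhood condition; (4) conclude the graph is complete bipartite with parts of size $n$, using that a connected $n$-regular graph on $2n$ vertices in which adjacent vertices share all other neighbors must be $K_{n,n}$. Throughout I would need the auxiliary fact that $H$ is connected, which the paper already notes because all spectral values are finite.

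The main obstacle I anticipate is step (2)–(4): turning the local conditions (degrees, neighborhood-equality on minimum pairs) into the global conclusion $H\cong K_{n,n}$. Having only two distinct resistance values is very restrictive, but one must rule out, e.g., disconnected-looking regular graphs or regular graphs that are bipartite-but-not-complete, or non-bipartite $n$-regular graphs on $2n$ vertices whose resistance spectrum happens to collapse to two values; the bite will be showing that the equality case in Lemma~\ref{lemma:xiajie}, holding simultaneously for all $n^2$ edges, is incompatible with anything other than $K_{n,n}$. A secondary subtlety is making the edge-count argument airtight: Foster's identity gives a sum, not a count, so I need the strict bounds on each term (every edge-resistance lies in a narrow window determined by the two spectral values and Proposition~\ref{proposition:cycle}) to pass from $\sum R = 2n-1$ to $|E(H)| = n^2$, and this may require separating the cases where an edge lies on a cycle from bridge edges.
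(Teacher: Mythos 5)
There are two genuine gaps in your plan, both at load-bearing points. First, the edge count: Foster's identity $\sum_{uv\in E(H)}R_H(u,v)=2n-1$ does \emph{not} by itself force $|E(H)|=n^2$, because the Diophantine equation $\tfrac{2}{n}a+\tfrac{2n-1}{n^2}b=2n-1$ (where $a$ edges carry resistance $\tfrac{2}{n}$ and $b$ edges carry $\tfrac{2}{n}-\tfrac{1}{n^2}$) has other admissible solutions, e.g.\ $a=2n-1$, $b=n^2-2n$ for $n\ge 3$. The paper's proof therefore begins with the claim you are missing: no pair with resistance $\tfrac{2}{n}$ can be an edge. That claim is proved by applying Lemma~\ref{lamma:n-2} to a hypothetical such edge, observing that every spectral value is a multiple of $\tfrac{1}{n^2}$ so the correction sum is $\tfrac{s_1}{n^2}$ with $|s_1|\le d(u)-1<2n$, and deducing $2n\mid s_1$, hence $s_1=0$ and $d(u)=n-1$; this is then killed by a two-case analysis (a detached $K_n$ clique versus a numerical contradiction). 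Only after this does Foster give $|E(H)|=n^2$, and only then does the multiplicity count show every non-edge has resistance $\tfrac{2}{n}$.

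Second, and more seriously, your step (3)--(4) rests on a false premise. The minimum spectral value $\tfrac{2}{n}-\tfrac{1}{n^2}=\tfrac{2n-1}{n^2}$ is \emph{strictly greater} than $\tfrac{1}{n+1}+\tfrac{1}{n+1}=\tfrac{2}{n+1}$, so the equality case of Lemma~\ref{lemma:xiajie} does \emph{not} hold on the edges of $K_{n,n}$ (being the global minimum of the spectrum is not the same as meeting the degree-dependent lower bound of the lemma). Worse, if adjacent vertices of an $n$-regular graph did satisfy $N(u)\setminus\{v\}=N(v)\setminus\{u\}$ on every edge, propagating that condition yields $K_{n+1}$ clique components, not $K_{n,n}$ --- this is exactly the configuration the paper rules out as a contradiction in its case (i). The paper's actual route to the bipartition is the step you lack: for a non-adjacent pair $u,u_2$ with $R(u,u_2)=\tfrac{2}{n}$ and $d(u)=d(u_2)=n$ (degrees obtained from Lemma~\ref{lamma:n-2} applied to non-edges), add the edge $uu_2$; the parallel principle and the principle of substitution give $R_{H'}(u,u_2)=\tfrac{2}{n+2}=\tfrac{1}{n+2}+\tfrac{1}{n+2}$, which \emph{does} meet the bound of Lemma~\ref{lemma:xiajie} in $H'$, forcing $N_H(u)=N_H(u_2)$ and hence the spanning complete bipartite subgraph. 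Without that trick your local conditions do not assemble into $K_{n,n}$.
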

\begin{proof}
	The resistance spectrum of the complete bipartite graphs $K_{n,n}$ is given by
	\[
	\RS(K_{n,n})=\left\lbrace \left[\frac{2}{n} \right]^{n(n-1)}, \left[\frac{2}{n}-\frac{1}{n^2} \right]^{n^2} \right\rbrace.
	\]
	
	If $H$ has the same resistance spectrum as the complete bipartite graph $K_{n,n}$, we prove below that $H$ is isomorphic to $K_{n,n}$.
	
	First, we claim that for any $u,v\in V(H)$, if $R_H(u,v)=\frac{2}{n}$, then we must have $uv\notin E(H)$.
	
	By contradiction, if $uv\in E(H)$, then applying Lemma \ref{lamma:n-2} to vertices $u$ and $v$, we have the following.
	\[
	\frac{2}{n}d(u)+R(u,v)+\sum_{z \in N(u)\backslash \{v\}}\left( R(u,z)-R(v,z) \right)=2.
	\]
	It is easy to see that there must exist an integer $s_1$ such that $\frac{s_1}{n^2}=\sum_{z \in N(u)\backslash \{v\}}\left( R(u,z)-R(v,z) \right)$, where $-2n+2\leq -d(u)+1 \leq s_1 \leq d(u)-1 \leq 2n-2$. That is,
	\[
	2n(d(u)-n+1)+s_1=0.
	\]
	Thus $2n\mid s_1$, so we have $s_1=0$ and $d(u)=n-1$. Similarly, $d(v)=n-1$.
	
	Hence, we have 
	\[
	R(u,v)=\frac{2}{n}=\frac{1}{d(u)+1}+\frac{1}{d(v)+1}.
	\]
	According to the equality condition in Lemma \ref{lemma:xiajie}, we have $N(u)\backslash \{v\} = N(v)\backslash \{u\}=\{u_1,u_2,\ldots,u_{n-2}\}$. Next we discuss two cases: (\romannumeral1) for every $1\leq i\leq n-2$, we have $R(u,u_i)=\frac{2}{n}$ and (\romannumeral2) there exists $1\leq i\leq n-2$ such that $R(u,u_i)=\frac{2}{n}-\frac{1}{n^2}$.
	
	(\romannumeral1) If for every $1\leq i\leq n-2$, we have $R(u,u_i)=\frac{2}{n}$, then similarly to the discussion for $u,v$, applying Lemma \ref{lamma:n-2} and Lemma \ref{lemma:xiajie} to $u,u_i$, we have
	\[
	N(u)\backslash \{u_i\} = N(u_i)\backslash \{u\}
	\]
	for every $1\leq i\leq n-2$. That is, $H$ contains a complete subgraph composed of $\{u_1,u_2,\ldots,u_{n-2},u_{n-1}=u,u_n=v\}=W$. Since $|W|=n$, the vertex set $V(H)\setminus W$ is not empty. Take any $x\in V(H)\setminus W$; clearly $x$ is not adjacent to $u$, and due to $N(u)\backslash \{u_i\} = N(u_i)\backslash \{u\}$, $x$ is not adjacent to any vertex in $W$. By the arbitrariness of $x$, it follows that $H$ is a disconnected graph, 
	which leads to a contradiction.

	(\romannumeral2) If there exists $1\leq i\leq n-2$ such that $R(u,u_i)=\frac{2}{n}-\frac{1}{n^2}$, then applying Lemma \ref{lamma:n-2} to $u,u_i$, we have
	\[
	\left( \frac{2}{n}-\frac{1}{n^2} \right)d(u)+\frac{2}{n}-\frac{1}{n^2}+\sum_{z \in N_H(u)\backslash \{u_i\}}\left( R_H(u,z)-R_H(u_i,z) \right)=2.
	\]
	That is,
	\[
	\left( \frac{2}{n}-\frac{1}{n^2} \right)(n-1)+\frac{2}{n}-\frac{1}{n^2}+\frac{n-2}{n^2}\geq2.
	\]
	Hence, we have $-2\geq 0$, a contradiction.
	
	From the above, we have shown that for any $u,v\in V(H)$, if $R_H(u,v)=\frac{2}{n}$, then we must have $uv\notin E(H)$. Equivalently, if $uv\in E(H)$, then we must have $R(u,v)=\frac{2}{n}-\frac{1}{n^2}$. Using Lemma \ref{lemma:n-1}, we have the following.
	\[
	\sum_{uv \in E(H)} R_(u, v) = 2n - 1.
	\]
	Thus $(\frac{2}{n}-\frac{1}{n^2})|E(H)|=2n-1$. Therefore, the graph $H$ has $n^2$ edges. Moreover, since the multiplicity of $\frac{2}{n}-\frac{1}{n^2}$ in $RS(H)$ is also $n^2$, for every $uv\notin E(H)$, we have $R(u,v)=\frac{2}{n}$. Taking any two non-adjacent vertices $u,v$ in $V(H)$, using Lemma \ref{lamma:n-2}, we have
	\[
	\frac{2}{n}d(u)+\sum_{z \in N(u)}\left( R(u,z)-R(v,z) \right)=2.
	\]
	It is not hard to see that there exists an integer $s_2$ such that $\frac{s_2}{n^2}=\sum_{z \in N(u)}\left( R(u,z)-R(v,z) \right)$, where $-2n+1\leq -d(u) \leq s_2 \leq d(u) \leq 2n-1$. Then we have
	\[
	\frac{2}{n}d(u)+\frac{s_2}{n^2}=2.
	\]
	Hence, we have $2n(d(u)-n)+s_2=0$. So $s_2=0, d(u)=n$, similarly we have $d(v)=n$.
	
	Finally, take any $u,v$ satisfying $R(u,v)=\frac{2}{n}$, then $d(u)=d(v)=n$. Denote $N(u)=\{v_1,v_2,\ldots,v_n \}$, and $U=V(H)\backslash N(u)=\{u_1=u,u_2,\ldots,u_n \}$ is the set consisting of $u$ and the vertices not adjacent to $u$. Add an edge $uu_2$ to $H$ to obtain a new graph $H'$, then by the parallel principle and the principle of substitution we have $R_{H'}(u,u_2)=\frac{2}{n+2}$. Thus,
	\[
	R_{H'}(u,u_2)=\frac{1}{d_{H'}(u)+1}+\frac{1}{d_{H'}(u_2)+1}.
	\]
	By Lemma \ref{lemma:xiajie}, we have $N_{H'}(u)\backslash \{u_2\}=N_{H'}(u_2)\backslash \{u\}$, so we have $N_{H}(u)=N_{H}(u_2)$. Similarly, $N_{H}(u)=N_{H}(u_i), 3\leq i \leq n$. Thus, we have found a complete bipartite subgraph $H''$ of $H$ with bipartition $U$ and $N(u)$, and because $|E(H)|=|E(H'')|$, we have $H=H''\cong K_{n,n}$.
\end{proof}

Similarly to the proof of Theorem \ref{knn}, we have the following result.

\begin{theorem}\label{knn+1}
	The complete bipartite graphs $K_{n,n+1}$ is determined by its resistance spectrum.
\end{theorem}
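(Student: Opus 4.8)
The plan is to follow the template of the proof of Theorem~\ref{knn}, adapted to the two resistance values that occur in $K_{n,n+1}$. Writing the parts of $K_{n,n+1}$ with sizes $n+1$ and $n$, the pairs inside the larger part have resistance $\frac{2}{n}$, the pairs inside the smaller part have resistance $\frac{2}{n+1}$, and so do the cross pairs, since $\frac{1}{n+1}+\frac{1}{n}-\frac{1}{n(n+1)}=\frac{2}{n+1}$; hence
\[
\RS(K_{n,n+1})=\left\{\left[\tfrac{2}{n}\right]^{\frac{n(n+1)}{2}},\ \left[\tfrac{2}{n+1}\right]^{\frac{n(3n+1)}{2}}\right\}.
\]
If $\RS(H)=\RS(K_{n,n+1})$, then (as in the text) $H$ is connected, $|V(H)|=2n+1$, every resistance distance in $H$ equals $\frac{2}{n}$ or $\frac{2}{n+1}$, every such value is an integer multiple of $\frac{1}{n(n+1)}$, and the difference of two of them lies in $\{0,\pm\frac{2}{n(n+1)}\}$. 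The goal is to reconstruct the bipartition.

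The first step is the analogue of the central claim of Theorem~\ref{knn}: \emph{if $R_H(u,v)=\frac{2}{n}$, then $uv\notin E(H)$}. Assuming $uv\in E(H)$ and applying Lemma~\ref{lamma:n-2} to $uv$, the divisibility bookkeeping (clearing denominators and reducing modulo $n+1$ rather than modulo $n$) forces $d(u)=d(v)=n-1$, so equality holds in Lemma~\ref{lemma:xiajie} and $N(u)\setminus\{v\}=N(v)\setminus\{u\}=:\{u_1,\dots,u_{n-2}\}$. Then I split into the same two cases: if $R(u,u_i)=\frac{2}{n}$ for every $i$, iterating the argument along the edges $uu_i$ makes $\{u,v,u_1,\dots,u_{n-2}\}$ a connected component of order $n<2n+1$, contradicting connectedness; and if $R(u,u_i)=\frac{2}{n+1}$ for some $i$, applying Lemma~\ref{lamma:n-2} to $uu_i$ and bounding each difference term above by $\frac{2}{n(n+1)}$ yields the impossible inequality $-4\ge 0$. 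Hence every edge of $H$ has resistance $\frac{2}{n+1}$.

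By Foster's identity (Lemma~\ref{lemma:n-1}), $\frac{2}{n+1}\,|E(H)|=2n$, so $|E(H)|=n(n+1)$. To pin down the degrees I apply Lemma~\ref{lamma:n-2} to an arbitrary edge $uv$: writing $k'$ for the number of $z\in N(u)\setminus\{v\}$ with $R(v,z)=\frac{2}{n}$, the same clearing of denominators gives $k'=n\bigl(d(u)-n\bigr)$, and $0\le k'\le d(u)-1$ forces $d(u)\in\{n,n+1\}$ for every vertex. Comparing with $\sum_u d(u)=2n(n+1)$ and $|V(H)|=2n+1$, there are exactly $n+1$ vertices of degree $n$ (call this set $A$) and exactly $n$ vertices of degree $n+1$ (call it $B$).

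Finally I identify $H$ with the complete bipartite graph on $(A,B)$. If $u\in B$, then $d(u)=n+1$, so $k'=n=d(u)-1$ for each edge at $u$; unwinding this shows that \emph{all} pairs of neighbours of $u$ have resistance $\frac{2}{n}$, and then the central claim makes $N(u)$ an independent set of $n+1$ vertices. Applying Lemma~\ref{lamma:n-2} once more, this time to a non-edge of resistance $\frac{2}{n}$, shows that a degree-$(n+1)$ endpoint of such a non-edge would see only the value $\frac{2}{n}$ throughout its neighbourhood, contradicting its adjacency to $u$; hence every vertex of $N(u)$ has degree $n$, so $N(u)\subseteq A$, and $|N(u)|=n+1=|A|$ gives $N(u)=A$. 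Thus every vertex of $B$ is joined to every vertex of $A$, which already accounts for all $n(n+1)$ edges, so $A$ and $B$ are independent sets and $H\cong K_{n+1,n}=K_{n,n+1}$. The main obstacle is this last step: in Theorem~\ref{knn} every non-edge carried the larger resistance, whereas here $\frac{2}{n+1}$ is shared by edges, by non-edges inside $B$, and by $A$--$B$ non-edges, so the bipartition must be extracted from the degree partition rather than read off the resistance pattern. Small $n$, where some of the divisibility inequalities cease to be strict, should be handled directly (for instance $K_{1,2}=P_3$).
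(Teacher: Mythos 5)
Your proposal is correct, and while its first half coincides with the paper's proof, the second half takes a genuinely different route. Both arguments begin identically: the claim that $R_H(u,v)=\tfrac{2}{n}$ forces $uv\notin E(H)$ (via Lemma~\ref{lamma:n-2}, divisibility modulo $n+1$, the equality case of Lemma~\ref{lemma:xiajie}, and the two-case contradiction), followed by Foster's identity giving $|E(H)|=n(n+1)$. From there the paper picks a pair with $R(u,v)=\tfrac{2}{n}$, applies Lemma~\ref{lamma:n-2} to this \emph{non-adjacent} pair to get $d(u)\in\{n,n+1\}$, rules out $n+1$ by a neighbourhood argument, shows $R(u,u_i)=\tfrac{2}{n}$ for all $u_i\in V(H)\setminus N(u)$, and then adds the edge $uu_i$ and invokes the parallel principle, the principle of substitution, and Lemma~\ref{lemma:xiajie} again to force $N_H(u)=N_H(u_i)$, assembling the bipartition vertex by vertex. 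You instead apply Lemma~\ref{lamma:n-2} to \emph{edges}, obtaining $k'=n(d(u)-n)$ with $0\le k'\le d(u)-1$ and hence $d(u)\in\{n,n+1\}$ for every vertex; the handshake count then pins the degree sequence to $n+1$ vertices of degree $n$ and $n$ vertices of degree $n+1$, and for each degree-$(n+1)$ vertex $u$ the equality $k'=d(u)-1$ shows $N(u)$ is independent with all internal resistances $\tfrac{2}{n}$, after which one more application of Lemma~\ref{lamma:n-2} to a non-edge inside $N(u)$ excludes degree $n+1$ there, giving $N(u)=A$ and finishing by edge count. I checked the key computations ($k'=n(d(u)-n)$, the forced equality in the non-edge application, and the contradiction via $u\in N(w)$ with $R(w',u)=\tfrac{2}{n+1}$) and they are sound. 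Your route buys a proof that never modifies the graph --- it avoids the edge-addition/substitution trick entirely and reads the bipartition off the degree partition --- at the cost of the extra counting step; the paper's route generalizes more directly to the $K_{n,n}$ and $K_{m,n}$ cases where the two parts are not distinguished by degree alone. Your closing caveat about small $n$ is apt but harmless: for $n=1$ the central claim already yields $d(u)=0$ for an endpoint of an edge, an immediate contradiction, so no separate treatment is really needed.
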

\begin{proof}
The resistance spectrum of the complete bipartite graph $K_{n,n+1}$ is given by
\[
\RS(K_{n,n+1})=\left\lbrace \left[\frac{2}{n+1} \right]^{\frac{n(3n+1)}{2}}, \left[\frac{2}{n} \right]^{\frac{n(n+1)}{2}} \right\rbrace.
\]

If $H$ has the same resistance spectrum as the complete bipartite graph $K_{n,n+1}$, we prove below that $H$ is isomorphic to $K_{n,n+1}$.

First, we claim that for any $u,v\in V(H)$, if $R_H(u,v)=\frac{2}{n}$, then we must have $uv\notin E(H)$.

By contradiction, if $uv\in E(H)$, then applying Lemma \ref{lamma:n-2} to vertices $u$ and $v$, we have
\[
\frac{2}{n}d(u)+R(u,v)+\sum_{z \in N(u)\backslash \{v\}}\left( R(u,z)-R(v,z) \right)=2.
\]
It is easy to see that there must exist an integer $s_1$ such that $\frac{2s_1}{n(n+1)}=\sum_{z \in N(u)\backslash \{v\}}\left( R(u,z)-R(v,z) \right)$, where $-2n+1\leq -d(u)+1 \leq s_1 \leq d(u)-1 \leq 2n-1$. That is,
\[
(n+1)(d(u)-n+1)+s_1=0.
\]
Thus $n+1\mid s_1$, so we have $s_1=0$ and $d(u)=n-1$. Similarly, $d(v)=n-1$.

Hence, we have 
\[
R(u,v)=\frac{2}{n}=\frac{1}{d(u)+1}+\frac{1}{d(v)+1}.
\]
According to the equality condition in Lemma \ref{lemma:xiajie}, we have $N(u)\backslash \{v\} = N(v)\backslash \{u\}=\{u_1,u_2,\ldots,u_{n-2}\}$. Next we discuss two cases: (\romannumeral1) for every $1\leq i\leq n-2$, we have $R(u,u_i)=\frac{2}{n}$ and (\romannumeral2) there exists $1\leq i\leq n-2$ such that $R(u,u_i)=\frac{2}{n}-\frac{1}{n^2}$.

Following the proof of Theorem \ref{knn}, we can show that case (\romannumeral1) does not exist.

If there exists $1\leq i\leq n-2$ such that $R(u,u_i)=\frac{2}{n+1}$, then applying Lemma \ref{lamma:n-2} to $u,u_i$, we have
\[
\frac{2}{n+1}d(u)+\frac{2}{n+1}+\sum_{z \in N_H(u)\backslash \{u_i\}}\left( R_H(u,z)-R_H(u_i,z) \right)=2.
\]
That is,
\[
\frac{2}{n+1}(n-1)+\frac{2}{n+1}+\frac{2(n-2)}{n(n+1)}\geq2.
\]
Hence, we have $-4\geq 0$, a contradiction.

From the above, we have shown that for any $u,v\in V(H)$, if $R_H(u,v)=\frac{2}{n}$, then we must have $uv\notin E(H)$. Equivalently, if $uv\in E(H)$, then we must have $R(u,v)=\frac{2}{n+1}$. Using Lemma \ref{lemma:n-1}, we have
\[
\sum_{uv \in E(H)} R_(u, v) = 2n.
\]
Thus $\frac{2}{n+1}|E(H)|=2n$. Therefore, the graph $H$ has $n(n+1)$ edges. Choosing two arbitrary vertex $u$ and $v$ satisfying $R(u,v)=\frac{2}{n}$, using Lemma \ref{lamma:n-2}, we have
\[
\frac{2}{n}d(u)+\sum_{z \in N(u)}\left( R(u,z)-R(v,z) \right)=2.
\]
It is not hard to see that there exists an integer $s_2$ such that $\frac{2s_2}{n(n+1)}=\sum_{z \in N(u)}\left( R(u,z)-R(v,z) \right)$, where $-2n+1\leq -d(u) \leq s_2 \leq 0$. Then we have
\[
\frac{2}{n}d(u)+\frac{2s_2}{n(n+1)}=2.
\]
Hence, we have $(n+1)(d(u)-n)+s_2=0$. This equation admits two possible solutions: $s_2 = 0$, $d(u) = n$ and $s_2 = -(n+1)$, $d(u) = n+1$. Similarly, the possible values for $d(v)$ are $n$ and $n+1$. We now prove that the case $s_2 = -(n+1)$, $d(u) = n+1$ is impossible:

Assume, by contradiction, that $s_2 = -(n+1)$ and $d(u) = n+1$. Then for every $x \in N(u)$, we have $R(u,x) = \frac{2}{n+1}$ and $R(v,x) = \frac{2}{n}$. This implies that for every $x \in N(u)$, $xv \notin E(H)$, and hence $d(v) \leq n-2$. However, this contradicts the fact that $d(v)$ can only be $n$ or $n+1$. Therefore, we must have $d(u) = n$. Similarly, we obtain $d(v) = n$.

Denote $N(u)=\{v_1,v_2,\ldots,v_n \}$, and $U=V(H)\backslash N(u)=\{u_1=u,u_2,\ldots,u_{n+1} \}$ is the set consisting of $u$ and the vertices not adjacent to $u$. Suppose that there exists some $i$ with $2 \leq i \leq n+1$ such that $R(u,u_i) = \frac{2}{n+1}$. Applying Lemma \ref{lamma:n-2} to $u$ and $u_i$ yields:
\[
2>\frac{2n}{n+1}+0\geq R(u,u_i)d(u)+\sum_{z \in N(u)}\left( R(u,z)-R(u_i,z) \right)=2.
\]
This leads to a contradiction. Therefore, $R(u,u_i) = \frac{2}{n}$ for all $2 \leq i \leq n+1$.

Add an edge $uu_2$ to $H$ to obtain a new graph $H'$, then by the parallel principle and the principle of substitution we have $R_{H'}(u,u_2)=\frac{2}{n+2}$. Thus
\[
R_{H'}(u,u_2)=\frac{1}{d_{H'}(u)+1}+\frac{1}{d_{H'}(u_2)+1}.
\]
By Lemma \ref{lemma:xiajie}, we have $N_{H'}(u)\backslash \{u_2\}=N_{H'}(u_2)\backslash \{u\}$, so we have $N_{H}(u)=N_{H}(u_2)$. Similarly, $N_{H}(u)=N_{H}(u_i), 3\leq i \leq n+1$. Thus, we have found a complete bipartite subgraph $H''$ of $H$ with bipartition $U$ and $N(u)$, and because $|E(H)|=|E(H'')|$, we have $H=H''\cong K_{n,n+1}$.
\end{proof}

In \cite{zhou2025determination}, it has been proven that star graphs can be determined by their resistance spectra. Since a star graph is essentially a complete bipartite graph $K_{1,n}$, correspondingly, we have proven that $K_{2,n}$ can also be determined by its resistance spectrum.

\begin{theorem}\label{k2n}
	The complete bipartite graph $K_{2,n}$ is determined by its resistance spectrum.
\end{theorem}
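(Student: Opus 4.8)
The plan is to follow the pattern of Theorems~\ref{knn} and~\ref{knn+1}. From the general formula, $\RS(K_{2,n})=\bigl\{[\tfrac{2}{n}]^{1},\,[\tfrac{n+1}{2n}]^{2n},\,[1]^{n(n-1)/2}\bigr\}$, so any $H$ with $\RS(H)=\RS(K_{2,n})$ is connected with $|V(H)|=n+2$. I would assume $n\ge 4$, so that $0<\tfrac{2}{n}<\tfrac{n+1}{2n}<1$ are three distinct values and $\tfrac{2}{n}$ has multiplicity one; the cases $n\le 3$ reduce to earlier results, since $K_{2,3}$ is covered by Theorem~\ref{knn+1}, $K_{2,2}$ by Theorem~\ref{knn}, and $K_{2,1}=K_{1,2}$ is a star.

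The first task is to decide which resistance value corresponds to edges. I would first show that no edge of $H$ has resistance $1$: if $uv\in E(H)$ with $R_H(u,v)=1$, then by Proposition~\ref{proposition:cycle} the edge $uv$ lies on no cycle, hence is a bridge, so deleting it splits $H$ into two components, one of which (say the one containing $u$) has at least two vertices; choosing $x\ne u$ in that component, the vertex $u$ separates $x$ from $v$, so Lemma~\ref{lemma:RDcut} gives $R_H(x,v)=R_H(x,u)+1>1$, contradicting that $1$ is the largest value in $\RS(H)$. Next, let $\{a,b\}$ be the unique pair with $R_H(a,b)=\tfrac{2}{n}$; if $ab\in E(H)$, then by the previous step all other edges have resistance $\tfrac{n+1}{2n}$, so Lemma~\ref{lemma:n-1} forces $|E(H)|-1=2n-\tfrac{4}{n+1}$, which is not an integer for $n\ge 4$ --- a contradiction. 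Hence $ab\notin E(H)$, every edge has resistance $\tfrac{n+1}{2n}$, and Lemma~\ref{lemma:n-1} gives $|E(H)|=2n$; matching multiplicities in $\RS(H)$ then shows every non-edge other than $\{a,b\}$ has resistance exactly $1$.

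The heart of the argument is to determine the degrees and neighbourhoods of $a$ and $b$. Applying Lemma~\ref{lamma:n-2} to $a$ and $b$, and using that $R_H(z,a)=\tfrac{n+1}{2n}$ for every $z\in N(a)$ while $R_H(z,b)\in\{\tfrac{n+1}{2n},1\}$ according to whether $zb\in E(H)$ (the value $\tfrac{2}{n}$ being impossible since $z\ne a$), the identity reduces to $d(a)(5-n)+(n-1)k=4n$ where $k=|N(a)\cap N(b)|$, and symmetrically $d(b)(5-n)+(n-1)k=4n$. Combining these with $k\le n$, $k\le d(a)$, and $d(a)\le n$ (because $d(a)=n+1$ would force $ab\in E(H)$), I would deduce that the only solution is $k=d(a)=d(b)=n$, whence $N(a)=N(b)=V(H)\setminus\{a,b\}$. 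Then $a$ and $b$ are adjacent to all $n$ remaining vertices and to nothing else, which already accounts for $2n=|E(H)|$ edges, so $V(H)\setminus\{a,b\}$ is an independent set and $H=K_{2,n}$.

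I expect the main obstacle to be this last step: extracting $k=d(a)=d(b)=n$ from the single relation $d(a)(5-n)+(n-1)k=4n$ requires treating the sign of $5-n$ separately (the subcases $n>5$, $n=5$, $n=4$ behave differently) and carefully excluding $d(a)=n+1$; this is also where the small-$n$ exceptions delegated to Theorems~\ref{knn} and~\ref{knn+1} become unavoidable.
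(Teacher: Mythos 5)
Your proposal is correct and follows essentially the same route as the paper: rule out resistance-$1$ edges via a bridge/cut-vertex argument (the paper phrases this with blocks), use Foster's theorem (Lemma~\ref{lemma:n-1}) to show the unique $\tfrac{2}{n}$-pair is a non-edge and $|E(H)|=2n$, then apply Lemma~\ref{lamma:n-2} to that pair to force both endpoints to have degree $n$ and finish by edge counting. The only real difference is your last step: the paper simply observes that every summand $\tfrac12+\tfrac{1}{2n}-R(v_0,z)$ is nonpositive, so the identity gives $\tfrac{2}{n}d(u_0)\ge 2$ and hence $d(u_0)=n$ at once, whereas your exact relation $d(a)(5-n)+(n-1)k=4n$ also works but the case split on the sign of $5-n$ that you anticipate is unnecessary, since the constraint $k\le d(a)$ already forces $d(a)\ge n$ uniformly in $n$.
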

\begin{proof}
	The resistance spectrum of the complete bipartite graph $K_{2,n}$ is given by
	\[
	\RS(K_{2,n})=\left\lbrace \left[\frac{2}{n} \right]^{1}, \left[\frac{1}{2}+\frac{1}{2n} \right]^{2n}, \left[1\right]^{\frac{n(n-1)}{2}} \right\rbrace.
	\]
	
	If $H$ has the same resistance spectrum as the complete bipartite graph $K_{2,n}$, we prove below that $H$ is isomorphic to $K_{2,n}$. By Theorems \ref{knn} and \ref{knn+1}, the conclusion holds for $n \leq 3$. In the following, we only consider the case $n \geq 4$.
	
	First, we prove that for any $u, v \in V(H)$, if $R(u, v) = 1$, then $uv \notin E(H)$.
	
	Suppose, for contradiction, that $uv \in E(H)$ and $R(u, v) = 1$. Then there exists a block $B$ of $H$ such that $u, v \in V(B)$. If $\{u, v\} = V(B)$, i.e., $B$ consists solely of the edge $uv$, then since $H$ is connected, there exists a vertex $x$ such that $x \in N(u) \setminus \{v\}$ or $x \in N(v) \setminus \{u\}$. Without loss of generality, assume $x \in N(u) \setminus \{v\}$. Then by Lemma \ref{lemma:RDcut}, we have
	\[
	R(v,x)=R(v,u)+R(u,x)>1,
	\]
	which contradicts the fact that the resistance distance between any two vertices in $H$ is at most $1$. If $\{u, v\} \subsetneqq V(B)$ and $B$ is $2$-connected, then there exists a cycle $C$ of length t containing the edge $uv$, and we have
	\[
	R_H(u,v)\leq R_C(u,v)=\frac{t-1}{t},
	\]
	which contradicts $	R_H(u,v)=1$. Therefore, for any $u, v \in V(H)$, if $R(u, v) = 1$, then $uv \notin E(H)$.
	
	Since the multiplicity of $\frac{2}{n}$ in $RS(H)$ is $1$, let $R(u_0, v_0) = \frac{2}{n}$. If $u_0 v_0 \in E(H)$, then by Lemma \ref{lemma:n-1} we have:
	\[
	\frac{2}{n}+(\frac{1}{2}+\frac{1}{2n})(|E(H)|-1)=\sum_{uv \in E(H)} R(u, v) = n + 1.
	\]
	Solving this yields $|E(H)|=2n+1-\frac{4}{n+1}$, which contradicts the fact that $|E(H)|$ is an integer. Hence, we have $|E(H)|=2n$, and for any $u, v \in E(H)$ we have $R(u,v)=\frac{1}{2}+\frac{1}{2n}$.
	
	Applying Lemma \ref{lamma:n-2} to $u_0, v_0$, we obtain
	\[
	\frac{2}{n}d(u_0)+\sum_{z \in N(u_0)}\left( \frac{1}{2}+\frac{1}{2n}-R(v_0,z) \right)=R(u_0,v_0)d(u_0)+\sum_{z \in N(u_0)}\left( R(u_0,z)-R(v_0,z) \right)=2.
	\]
	
	Since for any $z \in N(u_0)$, the possible values of $R(v_0, z)$ are only $1$ and $\frac{1}{2}+\frac{1}{2n}$, we have
	\[
	2=\frac{2}{n}d(u_0)+\sum_{z \in N(u_0)}\left( \frac{1}{2}+\frac{1}{2n}-R(v_0,z) \right)\leq \frac{2}{n}d(u_0).
	\]
	Solving this gives $d(u_0)\geq n$. Due to $|V(H)|=n+2$ and $u_0,v_0\notin N(u_0)$, we have $d(u_0)=n$, i.e., $N(u_0)=V(H)\backslash \{u_0,v_0\}$ Similarly, $N(v_0)=V(H)\backslash \{u_0,v_0\}$. Thus, we have found a complete bipartite subgraph $H''$ of $H$ with bipartition $\{u_0,v_0\}$ and $N(u_0)$, and because $|E(H)|=|E(H'')|$, we have $H=H''\cong K_{2,n}$.
\end{proof}

When $m$ is much larger than $n$, the differences among $\frac{2}{m}$, $\frac{1}{m}+\frac{1}{n}-\frac{1}{mn}$, and $\frac{2}{n}$ become more pronounced. This allows us to take advantage of an important property of resistance distance---the triangle inequality property, namely Lemma \ref{lemma:Triangle inequality}. Finally, we prove that $K_{m,n}$ can be determined by its resistance spectrum when $m > 3n + 1$. This result demonstrates that most complete bipartite graphs can be determined by their resistance spectra.

\begin{theorem}
		When $m>3n+1$, the complete bipartite graph $K_{n,m}$ is determined by its resistance spectrum.
\end{theorem}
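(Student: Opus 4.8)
The plan is to exploit the fact that, under the hypothesis $m>3n+1$, the three distinct resistance values of $K_{m,n}$ are spread far apart. Write $a=\tfrac{2}{m}$, $b=\tfrac{1}{m}+\tfrac{1}{n}-\tfrac{1}{mn}$ and $c=\tfrac{2}{n}$ for these values, occurring in $\RS(K_{m,n})$ with multiplicities $\binom{n}{2}$, $mn$ and $\binom{m}{2}$. If $\RS(H)=\RS(K_{m,n})$ then $\binom{|V(H)|}{2}=\binom{m+n}{2}$ forces $|V(H)|=m+n$, and a short computation gives $a<b<c$ with $b-a=\tfrac{m-n-1}{mn}$ and $c-b=\tfrac{m-n+1}{mn}$. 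The key arithmetic observation is that $m>3n+1$ is equivalent to $b-a>a$; consequently also $c-b>a$ and $c-a>b$, so every nonzero difference of two values in $\{a,b,c\}$ strictly exceeds $a$. (For $n\le 2$ the statement is already contained in the result on stars and in Theorem~\ref{k2n}, so one may assume $n\ge 3$ throughout.)

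First I would extract a rigidity property from the triangle inequality (Lemma~\ref{lemma:Triangle inequality}): if $R_H(u,v)=a$, then $|R_H(u,z)-R_H(v,z)|\le a$ for every $z\ne u,v$, and since every nonzero gap between the values exceeds $a$ this forces $R_H(u,z)=R_H(v,z)$. Hence ``$u=v$ or $R_H(u,v)=a$'' is an equivalence relation; its classes $C_1,\dots,C_t$ partition $V(H)$, all vertices of a class have the same resistance-distance vector to the outside, and any two distinct classes are joined by a single value, $b$ or $c$. Matching multiplicities gives $\sum_i\binom{|C_i|}{2}=\binom{n}{2}$. Next, applying Lemma~\ref{lamma:n-2} to an $a$-pair inside a class of size at least $2$ pins degrees: a non-edge $a$-pair yields $d_H(u)=m$, an edge $a$-pair yields $d_H(u)=m-1$, and in the latter case a further use of Lemma~\ref{lamma:n-2} (the non-edge version would give $d_H(u)=m$, a contradiction) shows the whole class is a clique, necessarily of true twins by the equality case of Lemma~\ref{lemma:xiajie}, with every vertex of degree $m-1$.

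The engine of the argument is to prove that no vertex lying in a class of size at least $2$ has a partner at resistance $c$. For such a vertex $u$ and a putative $v$ with $R_H(u,v)=c$, Lemma~\ref{lamma:n-2} makes $\sum_{z\in N(u)}\bigl(R_H(u,z)-R_H(v,z)\bigr)$ very negative, while the triangle inequality bounds this sum below by $-(c-a)$ times the number of neighbours of $u$ inside its own class plus $-(c-b)$ times the number outside; when $m>3n+1$ (the worst sub-case reduces to $(m-n)^2>m+n$, valid for $m\ge 3n+2$) these bounds are incompatible, both when $uv\in E(H)$ and when $uv\notin E(H)$, and both when $u$'s class is independent and when it is a true-twin clique. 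It follows that every $c$-pair of $H$ has both endpoints in singleton classes; since there are $\binom{m}{2}$ such pairs, the union $Q$ of the classes of size at least $2$ satisfies $|V(H)\setminus Q|\ge m$, hence $|Q|\le n$. Together with $\sum_{C\subseteq Q}\binom{|C|}{2}=\binom{n}{2}$ and the elementary inequality $\binom{p}{2}+\binom{q}{2}\le\binom{p+q}{2}$ (strict unless $p=0$ or $q=0$), this forces $Q$ to be a single class $Y$ of size exactly $n$; writing $X:=V(H)\setminus Y$, a final count of multiplicities shows $R_H$ equals $a$ on $Y\times Y$, $b$ on $X\times Y$ and $c$ on $X\times X$, exactly the pattern of $K_{m,n}$.

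It remains to recover the graph. Applying Lemma~\ref{lamma:n-2} to a pair $(x,y)\in X\times Y$ and substituting the exact values $a,b,c$ yields, after simplification, $|N_H(x)\cap Y|=m\bigl(d_H(x)-n\bigr)$ (plus $n$ when $xy\in E(H)$); in either case this forces $d_H(x)=n$, and $x$ is adjacent either to all of $Y$ and to no vertex of $X$, or to no vertex of $Y$ and to exactly $n$ vertices of $X$. Applying Lemma~\ref{lamma:n-2} to an $a$-pair of $Y$ shows $Y$ is either independent, in which case $d_H(y)=m$ for all $y\in Y$, so each $y$ is adjacent to all $m$ vertices outside $Y$, no $x\in X$ can be of the second type, and $H=K_{m,n}$; or $Y$ is a true-twin clique, in which case $d_H(y)=m-1$, exactly $n$ vertices of $X$ are of the second type, and they would induce an $n$-regular graph on $n$ vertices, which is impossible. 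Hence $H\cong K_{m,n}$. The step I expect to be the main obstacle is the $c$-partner exclusion together with the bookkeeping around it: Lemma~\ref{lamma:n-2} must be applied in several sub-cases (edge versus non-edge, independent class versus true-twin clique), and in each one the triangle-inequality estimates have to be pushed exactly far enough that the hypothesis $m>3n+1$, and nothing weaker, closes the gap.
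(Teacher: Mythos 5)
Your proposal is correct and follows essentially the same route as the paper's proof: the hypothesis $m>3n+1$ is used in exactly the same way to force $R_H(u,z)=R_H(v,z)$ for every $\tfrac{2}{m}$-pair via the triangle inequality (Lemma \ref{lemma:Triangle inequality}), Lemma \ref{lamma:n-2} then pins the degrees of such vertices to $m$ or $m-1$ (with the twin condition from Lemma \ref{lemma:xiajie}), the same lemma excludes a $\tfrac{2}{n}$-partner for them, and multiplicity counting identifies the two sides of the bipartition. The only substantive divergences are bookkeeping --- you organize the $\tfrac{2}{m}$-pairs into equivalence classes and bound the in-class neighbour count by $n-1$, where the paper instead shows each such vertex has a neighbour at resistance $\neq\tfrac{2}{m}$ --- and the endgame, where you classify the vertices of $X$ by a further application of Lemma \ref{lamma:n-2} (your identity $|N_H(x)\cap Y|=m\bigl(d_H(x)-n\bigr)$, resp.\ $+n$, checks out) while the paper closes with Foster's identity (Lemma \ref{lemma:n-1}) to show the large side is independent; both finishes are valid, and your key inequality $(m-n)^2>m+n$ does hold for all $m\ge 3n+2$.
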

\begin{proof}
	The resistance spectrum of the complete bipartite graphs $K_{n,m}$ is given by
	\[
	\RS(K_{n,m})=\left\lbrace \left[\frac{2}{m} \right]^{\frac{n(n-1)}{2}}, \left[ \frac{1}{m}+\frac{1}{n}-\frac{1}{mn}\right]^{mn}, \left[\frac{2}{n} \right]^{\frac{m(m-1)}{2}} \right\rbrace.
	\]
	
	If $H$ has the same resistance spectrum as the complete bipartite graph $K_{n,m}$, we prove below that $H$ is isomorphic to $K_{n,m}$.

	Take any three distinct vertices $u, v, x$ satisfying $R(u,v)=\frac{2}{m}$. Since $m>3n+1$, by Lemma \ref{lemma:Triangle inequality}, we must have $R(u,x)=R(v,x)$ holds. If $uv\in E(H)$, then applying Lemma \ref{lamma:n-2} to $u$ and $v$ yields
	\[
	\frac{2}{m}(d(u)+1)=R(u,v)d(u)+R(u,v)+\sum_{z \in N(u)\backslash\{v\}}\left( R(u,z)-R(v,z) \right)=2.
	\]
	Solving this gives $d(u)=m-1$. Similarly, $d(v)=m-1$, which clearly satisfies
	\[
	R(u,v)=\frac{2}{m}=\frac{1}{d(u)+1}+\frac{1}{d(v)+1}.
	\]
	By the equality condition in Lemma \ref{lemma:xiajie}, we have $N(u)\backslash \{v\}=N(v)\backslash\{u\}$. If $uv\notin E(H)$, then applying  Lemma \ref{lamma:n-2} to $u$ and $v$ yields
	\[
	\frac{2}{m}d(u)=R(u,v)d(u)+\sum_{z \in N(u)}\left( R(u,z)-R(v,z) \right)=2.
	\]
	Solving this gives $d(u)=m$. Similarly, $d(v)=m$. Adding an edge $uv$ to $H$ results in a new graph $H'$. By the parallel principle and the principle of substitution, we have $R_{H'}(u,v)=\frac{2}{m+2}$, and thus
	\[
	R_{H'}(u,v)=\frac{2}{m+2}=\frac{1}{d_{H'}(u)+1}+\frac{1}{d_{H'}(v)+1}.
	\]
	By the equality condition in Lemma \ref{lemma:xiajie}, we obtain $N_{H'}(u)\backslash \{v\}=N_{H'}(v)\backslash\{u\}$, so $N_H(u)=N_H(v)$ holds.
	
	The above discussion proves that for any two vertices $u, v$ satisfying $R(u,v)=\frac{2}{m}$, either $d(u)=d(v)=m-1$ and $N(u)\backslash \{v\}=N(v)\backslash\{u\}$; or $d(u)=d(v)=m$ and $N(u)=N(v)$.
	
	Define the set $A = \{ u \in V(H) | \exists v \in V(H) \text{ such that } R(u,v) = \frac{2}{m} \}$. Denote $B = V(H) \setminus A$. Then, from the above discussion, for any $u \in A$, we have $d(u) = m$ or $m-1$. Based on the multiplicity of $\frac{2}{m}$ in $RS(H)$, we can determine $|A| \geq n$. We will show that for any $u \in A$, there exists a vertex $x \in N(u)$ such that $R(u,x)\neq \frac{2}{m}$. 
	
	If $d(u) = m$, i.e., there exists a vertex $v \in A$ such that $R(u,v)=\frac{2}{m}$. If there exists a vertex $x$ such that $R(u,x)=\frac{2}{m}$ and $ux\in E(H)$ hold, then $d(u) = m-1$, which contradicts $d(u) = m$. Therefore, for any $u \in A$, if $d(u) = m$, then there must exist a vertex $x \in N(u)$ such that $R(u,x)\neq \frac{2}{m}$.
	
	If $d(u) = m-1$, assume for contradiction that for every $x \in N(u)$, $R(u,x)= \frac{2}{m}$ holds. Then, applying Lemma \ref{lamma:n-2} to all $x \in N(u)$ and $u$, we obtain $d(u)=d(x)=m-1$ and $N(u)\backslash \{x\}=N(x)\backslash\{u\}$. That is, $H$ contains a complete subgraph composed of $U=N(u)\cup \{u\}$. Since $|U|=m$, the vertex set $V(H)\setminus U$ is not empty. Take any $x\in V(H)\setminus U$; clearly $x$ is not adjacent to $u$, and due to the equation, $x$ is not adjacent to any vertex in $U$. By the arbitrariness of $x$, it follows that $H$ is a disconnected graph, which leads to a contradiction.
	
	Hence, we have shown that for any $u \in A$, there exists a vertex $x \in N(u)$ such that $R(u,x)\neq \frac{2}{m}$. Next, we prove that for any $u\in A$ and any $v\in V(H)$, we must have $R(u,v)\neq \frac{2}{n}$.
	
	For any $u \in A$ and $v \in V(H)$, if $uv\in E(H)$ and $R(u,v)=\frac{2}{n}$, then by Lemma \ref{lamma:n-2}, we have
	\[
	\frac{2}{n}d(u)+\frac{2}{n}+\sum_{z \in N(u)\backslash\{v\}}\left( R(u,z)-R(v,z) \right)=2.
	\]
	Thus,
	\[
	\frac{2}{n}(d(u)+1)+(\frac{2}{m}-\frac{2}{n})(d(u)-1)\leq 2.
	\]
	Solving this gives $d(u)\leq m-\frac{2m-n}{n} \leq m-\frac{m}{n}<m-1$, which contradicts $d(u)\geq m-1$. Therefore, for any $u \in A$ and $v \in V(H)$, if $uv\in E(H)$, then $R(u,v)\neq \frac{2}{n}$. 
	
	For any $u \in A$ and $v \in V(H)$, if $uv\notin E(H)$ and $R(u,v)=\frac{2}{n}$, then by Lemma \ref{lamma:n-2}, we have
	\[
	\frac{2}{n}d(u)+\sum_{z \in N(u)}\left( R(u,z)-R(v,z) \right)=2.
	\]
	Since $u \in A$, there exists a vertex $z \in N(u)$ such that $R(z,u)\neq\frac{2}{m}$, i.e., $R(z,u)\geq \frac{1}{m}+\frac{1}{n}-\frac{1}{mn}$. Thus,
	\[
	\frac{2}{n}d(u)- (\frac{2}{n}-( \frac{1}{m}+\frac{1}{n}-\frac{1}{mn} ) )-(\frac{2}{n}-\frac{2}{m})(d(u)-1)\leq 2.
	\]
	Solving this yields $d(u)\leq m-\frac{m-n-1}{2n}<m-1$, which contradicts $d(u)\geq m-1$. Therefore, for any $u \in A$ and $v \in V(H)$, if $uv\notin E(H)$, then $R(u,v)\neq \frac{2}{n}$.
	
	The above discussion proves that for any $u\in A$ and  $v \in V(H)$, we must have $R(u,v)\neq \frac{2}{n}$. That is, if $R(u,v)= \frac{2}{n}$, then $u,v\in B$. Therefore, based on the multiplicity of $\frac{2}{n}$ in $RS(H)$, we can determine $|B| \geq m$. Since $|A|\geq n$ and $|A|+|B|=m+n$, we must have $|A|=n$ and $|B|=m$. This implies that for any $u, v \in A$, we have $R(u,v)=\frac{2}{m}$. Consequently, for any $u, v \in A$, we have $d(u)=d(v)$.
	
	If for every $u\in A$, $d(u)=m-1$ is true, then it is easy to deduce $N(u)=A\backslash\{u\}$. This contradicts the fact that for any $u \in A$, there exists a vertex $x \in N(u)$ such that $R(u,x)\neq \frac{2}{m}$. Therefore, for every $u \in A$, $d(u)=m$ holds. Thus, for any $u, v \in A$, we have $uv\notin E(H)$ and $N(u)=N(v)=B$. Combing with the following of the set $A$, we can conclude that for any $u\in A$ and $v\in B$, we have $R(u,v)=\frac{1}{m}+\frac{1}{n}-\frac{1}{mn}$. By Lemma \ref{lemma:n-1}, we have
	\[
	m+n-1=mn(\frac{1}{m}+\frac{1}{n}-\frac{1}{mn})\leq \sum_{uv \in E(H)} R(u, v) = m+n - 1.
	\]
	The equation holds if and only if for any $u, v \in B$, $uv\notin E(H)$ holds. Hence, we have proven that $H$ is a complete bipartite graph with bipartition $A$ and $B$, i.e., $H \cong K_{n,m}$.
\end{proof}

\section{Conclusion}\label{sec:Conclusion}
In this paper, we investigate the problem of whether complete bipartite graphs can be determined by their resistance spectra. We show that $K_{n,n}$, $K_{n,n+1}$, and $K_{m,n}$ (with $m > 3n+1$) are all determinable by their resistance spectra. These results naturally lead to the conjecture that \textit{all} complete bipartite graphs may be determinable by their resistance spectra. Proving this general case or providing a counterexample demonstrating that certain classes of complete bipartite graphs are not determinable by their resistance spectra would be an interesting problem for future research.
\begin{conjecture}
	Every complete bipartite graph \(K_{m,n}\) can be determined by its resistance spectrum.
\end{conjecture}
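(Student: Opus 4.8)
The plan is to assume without loss of generality that $m \ge n \ge 1$ and to abbreviate the three resistance values occurring in $\RS(K_{m,n})$ by
\[
a=\frac{2}{m},\qquad b=\frac{1}{m}+\frac{1}{n}-\frac{1}{mn}=\frac{m+n-1}{mn},\qquad c=\frac{2}{n},
\]
so that $a\le b\le c$, each value is an integer multiple of $\frac{1}{mn}$, and their multiplicities in $\RS(K_{m,n})$ are $\binom{n}{2}$, $mn$, and $\binom{m}{2}$. Counting unordered pairs gives $\binom{|V(H)|}{2}=\binom{n}{2}+mn+\binom{m}{2}=\binom{m+n}{2}$, so $|V(H)|=m+n$ in every regime. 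Since the cases $m=n$, $m=n+1$, $n\le 2$, and $m>3n+1$ are already settled, the task reduces to the middle range $n\ge 3$ and $n+2\le m\le 3n+1$ (where the three values are genuinely distinct), on which I would concentrate.

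Across all of the proven theorems the argument follows a common three-phase skeleton, which I would retain. Phase one identifies the edges: the aim is to show that every edge of $H$ carries resistance exactly $b$, so that by Lemma~\ref{lemma:n-1} we get $b\,|E(H)|=m+n-1$, hence $|E(H)|=mn$ and the edge set is precisely the set of $b$-pairs. Excluding $c$-edges proceeds as in Theorems~\ref{knn} and~\ref{knn+1}: one substitutes $R(u,v)=c$ into Lemma~\ref{lamma:n-2}, uses that every resistance is a multiple of $\frac{1}{mn}$ to force $d(u)=d(v)$, and derives a forbidden complete subgraph. Phase two pins the degrees, by applying Lemma~\ref{lamma:n-2} to a $c$-pair and to an $a$-pair; after clearing denominators this becomes the Diophantine identity
\[
(mn)\,R(u,v)\,d(u)+\sum_{z\in N(u)}(mn)\bigl(R(u,z)-R(v,z)\bigr)=2mn,
\]
which, together with the exact multiplicities, should force one degree class of size $m$ and one of size $n$. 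Phase three reconstructs $H$: using the equality case of Lemma~\ref{lemma:xiajie}---if necessary after the edge-addition trick of adding one edge to a same-class pair so as to land exactly on the bound $\frac{1}{d+1}+\frac{1}{d+1}$---one shows that same-class vertices are twins with a common neighborhood, whence $H\cong K_{m,n}$.

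The obstacle, and the reason the conjecture remains open, lives in phases one and two once $m\le 3n+1$. The hypothesis $m>3n+1$ was used exactly so that $a=\frac{2}{m}$ is strictly smaller than the least gap $\min(b-a,\,c-b)=\frac{m-n-1}{mn}$ between distinct resistance values; Lemma~\ref{lemma:Triangle inequality} then forces $R(u,x)=R(v,x)$ for every third vertex $x$ whenever $R(u,v)=a$, and this single fact drives the entire reconstruction. When $m\le 3n+1$ this inequality fails: two vertices at resistance $a$ may differ at some $x$, and an edge could a priori realize the small value $a$ when its endpoints have near-maximal degree and almost identical neighborhoods---precisely the equality regime of Lemma~\ref{lemma:xiajie}. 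The crux is therefore a rigidity statement replacing the triangle-inequality step, namely that the prescribed multiplicities (exactly $\binom{n}{2}$ copies of $a$ and $\binom{m}{2}$ copies of $c$) are incompatible with any edge assignment other than the complete-bipartite one.

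I expect the decisive new ingredient to be a lemma bounding the number of unordered pairs that can simultaneously attain the lower bound of Lemma~\ref{lemma:xiajie}---the twin pairs---and showing that in any $H$ realizing $\RS(K_{m,n})$ the twin relation partitions $V(H)$ into exactly two classes, of sizes $m$ and $n$. Establishing this combinatorial rigidity uniformly across the middle range, without the crutch of the triangle inequality, is where I anticipate essentially all of the remaining difficulty to lie; the degree-pinning and reconstruction phases should then follow the template of the theorems already proved.
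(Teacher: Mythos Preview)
This statement is a \emph{conjecture} in the paper, not a theorem: the paper explicitly presents it as an open problem for future research and offers no proof. Consequently there is no paper proof to compare your proposal against. Your submission is likewise not a proof but a research plan, and you say as much---you correctly isolate the middle range $n\ge 3$, $n+2\le m\le 3n+1$ as the only unsettled regime, recapitulate the three-phase template common to the proven cases, and then identify precisely where and why that template breaks: the hypothesis $m>3n+1$ was used to make $a=\frac{2}{m}$ smaller than the gaps between consecutive resistance values, so that Lemma~\ref{lemma:Triangle inequality} forces $R(u,x)=R(v,x)$ for every third vertex $x$ whenever $R(u,v)=a$, and without this the degree-pinning and edge-classification steps no longer go through mechanically. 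Your proposed replacement---a combinatorial rigidity lemma controlling the twin pairs and forcing a two-class partition of sizes $m$ and $n$---is a plausible direction but is, by your own account, not established.

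So the honest summary is that the conjecture remains open, the paper does not claim otherwise, and your proposal is a reasonable diagnosis of where the difficulty sits rather than a proof. There is nothing to grade as correct or incorrect against the paper, because the paper contains no argument for the general case.
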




\bibliography{references1}

\end{document}